\newtheorem{theorem}{Theorem}[section]
\newtheorem{lemma}[theorem]{Lemma}
\newtheorem{proposition}[theorem]{Proposition}
\theoremstyle{definition}
\newtheorem{conjecture}[theorem]{Conjecture}
\theoremstyle{remark}
\newtheorem{remark}[theorem]{Remark}
\numberwithin{equation}{section}
\begin{document}
\title[A fully-nonlinear flow and quermassintegral inequalities]
{A fully-nonlinear flow and quermassintegral inequalities in the sphere}
\dedicatory{Dedicated to Joseph Kohn on the occasion of his 90th birthday}

\author{Chuanqiang Chen}
\address{Chuanqiang Chen, School of Mathematics and Statistics, Ningbo University, Ningbo, 315211, Zhejiang Province, P.R. China}\email{chenchuanqiang@nbu.edu.cn}
\author{Pengfei Guan} \address{Pengfei Guan, Department of Mathematics, McGill University, Montreal, Quebec,\linebreak H3A2K6, Canada}\email{guan@math.mcgill.ca}

\author{Junfang Li} \address{Junfang Li, Department of Mathematics, University of Alabama at Birmingham, Birmingham, AL 35294, USA} \email{jfli@uab.edu}

\author{Julian Scheuer}
\address{Julian Scheuer, School of Mathematics, Cardiff University, Cardiff CF24 4AG, Wales, UK}
\email{scheuerj@cardiff.ac.uk}
\thanks{ Research of CC was supported by NSFC NO. 11771396, research of PG was supported in part by NSERC Discovery Grant, research of JL was supported in part by NSF DMS-1007223, and JS was supported by the "Deutsche Forschungsgemeinschaft" (DFG, German research foundation), Project "Quermassintegral preserving local curvature flows", No. SCHE 1879/3-1.}

\begin{abstract}
This expository paper presents the current knowledge of particular fully nonlinear curvature flows with local forcing term, so-called locally constrained curvature flows. We focus on the spherical ambient space. The flows are designed to preserve a quermassintegral and to de-/increase the other quermassintegrals. The convergence of this flow to a round sphere would settle the full set of quermassintegral inequalities for convex domains of the sphere, but a full proof is still missing. Here we collect what is known and hope to attract  wide attention to this interesting problem.
\end{abstract}

\keywords {Fully-nonlinear flow, Quermassintegral inequalities, Constant rank theorem.}

\subjclass{53C23, 35J60, 53C42}

\maketitle

\section{Introduction}

Let $M^{n}$ be a smooth, closed and connected manifold and let $X : M^n \hookrightarrow \mathbb{S}^{n+1}$ be the embedding of a strictly convex hypersurface. Let $p\in \mathbb{S}^{n+1}$ be a point in the interior of the convex body enclosed by $M$, such that $M$ lies in the interior of the hemisphere determined by $p$ and denote by
\begin{align}
ds^2  = d\rho ^2  + \phi (\rho )^2 dz^2
\end{align}
the metric in polar coordinates around $p$, where $\phi (\rho ) = \sin(\rho)$, $\rho \in [0, \frac{\pi}{2} )$, is the radial distance, and $dz^2$ is the induced standard metric on $\mathbb{S}^n$. %in Euclidean space.

We consider the following locally constrained curvature flow in the sphere:
\begin{align} \label{1.3}
&\frac{{\partial X}}{{\partial t}} = (c_{n,k} \phi '(\rho ) - u\frac{{\sigma _{k + 1} (\lambda )}}{{\sigma _k (\lambda )}})\nu, \\
&X( \cdot,0)=X_0(\cdot). \notag
\end{align}
where $X(x,t) \in \mathbb{S}^{n+1}$ is the position vector of the evolving hypersurface $M(t)$, $\nu$ the outward unit normal,  $u =\langle\phi (\rho )\frac{\partial }{{\partial \rho }}, \nu\rangle$, $\lambda =(\lambda_1, \cdots, \lambda_n )$ the  principal curvatures, $X_0: M_0 \hookrightarrow \mathbb{S}^{n+1}$ the initial embedded hypersurface, $ \sigma_k$ the $k$-th elementary symmetric function,
 and $c_{n,k}  = \frac{{\sigma _{k + 1} (I)}}{{\sigma _k (I)}} = \frac{{n - k}}{{k + 1}}$, $I=(1, \cdots, 1)$.

The particular interest in these flows stems from its monotonicity properties with respect to the quermassintegrals for convex bodies $\Omega$ in the sphere. Let $M=\partial \Omega$, set
\begin{align}
\mathcal{A}_{-1}= & \mathrm{Vol}(\Omega), \quad \mathcal{A}_0  = \int_M {d\mu _g }, \nonumber \\
\mathcal{A}_1  =& \int_M {\sigma _1 (\lambda )d\mu _g }  + n \mathrm{Vol}(\Omega ), \\
\mathcal{A}_m  =& \int_M {\sigma _m (\lambda )d\mu _g }  + \frac{{n - m + 1}}{{m - 1}}\mathcal{A}_{m - 2}, \nonumber
\end{align}
where $2 \leq m \leq n$. Here $g$ is the induced metric on $M$ and $d\mu_{g}$ the associated volume element.

The monotonicity properties of those functionals along the flow \eqref{1.3} follow from
the following Hsiung-Minkowski identities (see Proposition 2.6):
\begin{align}
(m + 1)\int_M {u\sigma _{m + 1} (\lambda )}  = (n - m)\int_M { \phi (\rho )'\sigma _m (\lambda )}, \quad  0 \leq m \leq n-1.
\end{align}
With the help of the evolution equations (see Proposition 2.9):
\begin{align}
\partial _t \mathcal{A}_{-1}  = \int_M {\left(c_{n,k} \phi '(\rho ) - u\frac{{\sigma _{k + 1} (\lambda )}}{{\sigma _k (\lambda )}}\right)d\mu _g },
\end{align}
and
\begin{align}
\partial _t \mathcal{A}_l  = (l + 1)\int_M {\sigma _{l + 1}(\lambda ) \left(c_{n,k} \phi '(\rho ) - u\frac{{\sigma _{k + 1} (\lambda )}}{{\sigma _k (\lambda )}}\right)d\mu _g },
\end{align}
we deduce that along the flow \eqref{1.3} for $0 \leq k \leq n-1$, the following monotonicity relations hold:
\begin{align}
\partial _t \mathcal{A}_l \left\{ \begin{array}{l}
  \ge 0,\quad \text{ if } l < k-1; \\
  = 0,\quad \text{ if } l = k-1; \\
  \le 0,\quad \text{ if } l > k-1. \\
 \end{array} \right.
\end{align}
Hence, if one can prove that the flow \eqref{1.3} moves
an arbitrary convex hypersurface to a round sphere, then the following conjecture
would turn into a theorem:

\begin{conjecture} \label{th1.2}
\begin{align}\label{1.7}
\mathcal{A}_l \leq \xi_{l,k}(\mathcal{A}_k ), \quad \forall -1\le l<k\le n,
\end{align}
where $\xi_{l,k}$ is the unique positive function defined on $(0, \infty)$ such that `` = " holds when
$M$ is a geodesic sphere. `` = " holds if and only if $M$ is a geodesic sphere.
\end{conjecture}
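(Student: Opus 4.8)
The plan is to deduce Conjecture~\ref{th1.2} from the long-time existence and asymptotic roundness of the flow~\eqref{1.3}. Fix a pair $-1\le l<k\le n$ with $l\le n-2$; this excludes only the extreme case $(l,k)=(n-1,n)$, which is not covered by~\eqref{1.3} and would require a separate flow. Run~\eqref{1.3} with the index $l+1$ in place of $k$ (admissible since $0\le l+1\le n-1$). By the evolution equations and the monotonicity relations displayed above, along this flow $\mathcal{A}_l$ stays constant while $\mathcal{A}_k$ is nonincreasing. Assume we have shown that for every strictly convex embedded hypersurface $M_0$ contained in an open hemisphere the flow exists on $[0,\infty)$, remains a strictly convex embedding inside that hemisphere, and converges in $C^\infty$ to a geodesic sphere $S_\infty$. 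Since the quermassintegrals of geodesic balls are strictly increasing in the radius, each $\xi_{l,k}$ is strictly increasing, and
\[
\mathcal{A}_l(M_0)=\mathcal{A}_l(S_\infty)=\xi_{l,k}\bigl(\mathcal{A}_k(S_\infty)\bigr)\le\xi_{l,k}\bigl(\mathcal{A}_k(M_0)\bigr),
\]
which is~\eqref{1.7}. Equality forces $\mathcal{A}_k(S_\infty)=\mathcal{A}_k(M_0)$, hence $\mathcal{A}_k$ constant along the flow, and then, by the rigidity of the Newton--MacLaurin inequalities entering the monotonicity together with the Hsiung--Minkowski identities of Proposition~2.6, $M_0$ is totally umbilic, i.e.\ a geodesic sphere.

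The analytic core is to supply the long-time existence and convergence just assumed. Because $F=\sigma_{k+1}/\sigma_k$ is smooth, monotone and concave on the relevant open cone of principal curvatures and $u>0$ so long as $M(t)$ is strictly convex and inside the hemisphere, equation~\eqref{1.3} is strictly parabolic; writing $M(t)$ as a radial graph over $\mathbb{S}^n$ reduces it to a scalar parabolic PDE, which gives short-time existence and uniqueness. One then needs the a priori estimates: a $C^0$ bound trapping the radial function in a compact subinterval of $[0,\tfrac{\pi}{2})$ bounded away from $0$, via comparison with barrier geodesic spheres; a gradient estimate, equivalently a positive lower bound for $u$; and a $C^2$ bound, that is a uniform upper bound on the principal curvatures. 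With convexity these yield uniform parabolicity, whereupon Krylov--Safonov and Evans--Krylov estimates give uniform $C^{2,\alpha}$ bounds, and Schauder bootstrapping gives uniform $C^\infty$ bounds, so the flow exists for all time. For convergence one uses a non-preserved quermassintegral as a Lyapunov functional: it is monotone and, being squeezed between the corresponding quantities of the inner and outer barrier spheres, bounded, so its time derivative is integrable and tends to $0$ along a sequence $t_j\to\infty$; by the uniform estimates one extracts a smooth limit, which by the Hsiung--Minkowski identities and a Cauchy--Schwarz/stability argument must be totally umbilic, hence a geodesic sphere whose radius is pinned down by the preserved quermassintegral. As this identifies all subsequential limits, the whole flow converges; linearizing~\eqref{1.3} about $S_\infty$ and checking that the linearized operator is strictly stable upgrades this to exponential convergence in $C^\infty$.

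The decisive obstacle is the preservation of strict convexity, or equivalently the $C^2$ (curvature) estimate. Unlike in $\mathbb{R}^{n+1}$, in $\mathbb{S}^{n+1}$ the evolution equation of the second fundamental form $h_{ij}$ acquires, through the forcing term $c_{n,k}\phi'(\rho)$ and the ambient sectional curvature, zeroth- and first-order terms whose signs the tensor maximum principle cannot control, so there is no obvious reason that $h_{ij}>0$ is preserved, and no uniform upper curvature bound is known in its absence. The existing partial results instead invoke a constant rank theorem for the \emph{elliptic} equation $c_{n,k}\phi'(\rho)=u\,\sigma_{k+1}(\lambda)/\sigma_k(\lambda)$ characterizing the stationary solutions, in order to exclude degenerate limiting shapes; but a version of this rigidity that persists dynamically along the flow, for every admissible $k$, is not available. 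Providing it, or finding another route to the curvature bound, is precisely what would convert the outline above into a proof, and is the reason Conjecture~\ref{th1.2} is still open.
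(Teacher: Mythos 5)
The statement you set out to prove is labelled a \emph{conjecture} in the paper, and the paper contains no proof of it; it only explains (as you do) that the inequalities \eqref{1.7} would follow from long-time existence and convergence of the flow \eqref{1.3} to a geodesic sphere. Your reduction is exactly the paper's intended programme: run \eqref{1.3} with parameter $l+1$, use Proposition \ref{prop2.6} and Proposition \ref{prop2.9} to see that $\mathcal{A}_l$ is preserved while $\mathcal{A}_k$ is nonincreasing, pass to the limiting sphere, and use the monotonicity of $\xi_{l,k}$ in the radius. But the hypothesis you ``assume we have shown'' --- global existence, preserved convexity and smooth convergence to a round sphere --- is precisely the open problem, so the proposal does not prove the statement; it is a (correct) conditional reduction plus an honest acknowledgement of the missing analytic input. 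Two smaller points: the monotonicity of $\mathcal{A}_k$ is only provided by \eqref{2.15} for $k\le n-1$, so the cases with $k=n$ (not only the pair $(n-1,n)$ you set aside) need separate justification; and the equality discussion should be run along the flow with some care, since strictness in the Newton--MacLaurin inequality only forces umbilicity of the evolving hypersurfaces, from which one then argues back to $M_0$.

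Where your description of the state of the art goes wrong is in locating the gap at the preservation of convexity: you write that ``there is no obvious reason that $h_{ij}>0$ is preserved,'' whereas Section 4 of the paper proves exactly this, in two ways --- a maximum-principle argument for the inverse Weingarten tensor $b=(h^i_j)^{-1}$ exploiting the inverse concavity of $F=\sigma_{k+1}/\sigma_k$ together with the two-sided bounds on $F$ of Theorem \ref{th4.5}, and alternatively the Bian--Guan constant rank theorem applied along the parabolic flow itself (not merely to the elliptic equation for stationary solutions, as you suggest). Likewise the $C^0$ estimate, the positive lower bound on $u$ (hence the gradient bound) and the uniform bounds on $F$ are all established for as long as a smooth strictly convex solution exists. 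The single missing ingredient is the uniform \emph{upper} bound on the principal curvatures (the $C^2$ estimate), which is what obstructs long-time existence and convergence for $1\le k\le n-1$; only for $k=0$ is the full scheme known \cite{GL15}, which yields the $l=-1$ cases of the conjecture recorded in the proposition at the end of Section 4. So your outline coincides with the paper's strategy, but as a proof of Conjecture \ref{th1.2} it has the same genuine gap the paper itself points out, and the obstruction is the curvature upper bound rather than convexity preservation.
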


Flow \eqref{1.3} is another example of hypersurface flows which have been introduced recently with goals to
establish optimal geometric inequalities \cite{GL09, BGL18,GL15, GL18, GLW16} for hypersurfaces in space forms. These locally constrained flows are associated to the optimal solutions to the problems of calculus of variations in geometric setting.
The counterpart of \eqref{1.3} in $\mathbb R^{n+1}$ was considered in \cite{GL15, GL18}, where the longtime existence and convergence were proved by transforming the equation to corresponding inverse type PDE on $\mathbb S^n$ for the support function. In the case of $\mathbb S^{n+1}$, up to several special values of $k$ and $l$, this conjecture is open until today, see for example \cite{BGL18, CS21,MS16}. The main issue is that so far we can not control the curvature along the flow \eqref{1.3} from above (except the case $k=0$ \cite{GL15}).
 All the other a priori estimates for this flow are in place and this note is supposed to collect those estimates.

\medskip

\medskip

The rest of this article is organized as follows. In section 2, we list some basic facts for $k$-th elementary symmetric functions, hypersurfaces in $\mathbb{S}^{n+1}$ and evolution equations. In section 3, we prove the $C^0$, $C^1$ a priori estimates and uniform bounds of $F=\frac{{\sigma _{k + 1} (\lambda )}}{{\sigma _k (\lambda )}}$. In section 4, we prove the strict convexity of $M(t)$ along the flow \eqref{1.3} if $M_0$ is convex. In the last section, we give a discussion of the $C^2$ estimate.

\section{Preliminary}

We first recall some well-known facts about $k$-th elementary symmetric functions, hypersurfaces in $\mathbb{S}^{n+1}$, and then give some evolution equations along the flow \eqref{1.3}.

\subsection{Elementary symmetric functions}
For any $k = 1, \cdots, n$, and $\lambda=(\lambda_1, \cdots, \lambda_n)$, the $k$-th elementary symmetric function is defined as follows
\begin{equation}\label{2.1}
 \sigma_k(\lambda) = \sum _{1 \le i_1 < i_2 <\cdots<i_k\leq n}\lambda_{i_1}\lambda_{i_2}\cdots\lambda_{i_k}.
\end{equation}
and $\sigma_0 =1$. First, we denote by $\sigma _k (\lambda \left| i \right.)$ the symmetric
function with $\lambda_i = 0$ and $\sigma _k (\lambda \left| ij
\right.)$ the symmetric function with $\lambda_i =\lambda_j = 0$.
\begin{proposition}\label{prop2.1}
Let $\lambda=(\lambda_1,\dots,\lambda_n)\in\mathbb{R}^n$ and $k
= 1, \cdots,n$, then
\begin{align*}
&\sigma_k(\lambda)=\sigma_k(\lambda|i)+\lambda_i\sigma_{k-1}(\lambda|i), \quad \forall \,1\leq i\leq n,\\
&\sum_i \lambda_i\sigma_{k-1}(\lambda|i)=k\sigma_{k}(\lambda),\\
&\sum_i\sigma_{k}(\lambda|i)=(n-k)\sigma_{k}(\lambda).
\end{align*}
\end{proposition}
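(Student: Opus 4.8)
The three identities in Proposition \ref{prop2.1} are the standard combinatorial identities for elementary symmetric functions, and I would prove them by a direct counting argument, exploiting the fact that $\sigma_k(\lambda\,|\,i)$ is precisely the sum over those $k$-subsets of $\{1,\dots,n\}$ that avoid the index $i$.

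For the first identity, I would split the defining sum \eqref{2.1} for $\sigma_k(\lambda)$ according to whether or not the index $i$ appears among $i_1<\cdots<i_k$. The terms in which $i$ does not occur assemble exactly into $\sigma_k(\lambda\,|\,i)$, while the terms in which $i$ does occur each carry a factor $\lambda_i$, and removing that factor leaves a product over a $(k-1)$-subset of $\{1,\dots,n\}\setminus\{i\}$; summing these gives $\lambda_i\,\sigma_{k-1}(\lambda\,|\,i)$. Adding the two contributions yields $\sigma_k(\lambda)=\sigma_k(\lambda\,|\,i)+\lambda_i\sigma_{k-1}(\lambda\,|\,i)$.

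For the second identity, I would multiply the first identity by nothing and instead sum the relation $\lambda_i\sigma_{k-1}(\lambda\,|\,i)$ over $i$: each monomial $\lambda_{j_1}\cdots\lambda_{j_k}$ in $\sigma_k(\lambda)$ arises in $\lambda_i\sigma_{k-1}(\lambda\,|\,i)$ exactly when $i\in\{j_1,\dots,j_k\}$, hence exactly $k$ times, giving $\sum_i\lambda_i\sigma_{k-1}(\lambda\,|\,i)=k\sigma_k(\lambda)$. Alternatively one can differentiate: $\lambda_i\partial_{\lambda_i}\sigma_k = \lambda_i\sigma_{k-1}(\lambda\,|\,i)$ and $\sigma_k$ is homogeneous of degree $k$, so Euler's relation finishes it. For the third identity, I would sum the first identity over $i$ and substitute the second: $\sum_i\sigma_k(\lambda\,|\,i) = \sum_i\big(\sigma_k(\lambda)-\lambda_i\sigma_{k-1}(\lambda\,|\,i)\big) = n\sigma_k(\lambda) - k\sigma_k(\lambda) = (n-k)\sigma_k(\lambda)$.

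There is no real obstacle here; the only point requiring a little care is the bookkeeping in the counting argument for the first and second identities — in particular checking that when $i$ occurs in a $k$-subset, the complementary indices indeed range over all $(k-1)$-subsets of the remaining $n-1$ indices, with no double counting. Everything else is a one-line consequence of these two, so I would present the first identity in full detail and then derive the other two by the summation trick above.
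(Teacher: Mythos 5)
Your proof is correct and complete: the splitting of the defining sum for the first identity, the counting (or Euler homogeneity) argument for the second, and the derivation of the third by summing the first and substituting the second are all sound, and the bookkeeping you flag is handled correctly. The paper itself offers no proof of Proposition \ref{prop2.1} --- it records these identities as standard facts about elementary symmetric functions --- so your argument is exactly the routine verification the authors left implicit, and nothing more needs to be said.
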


Viewing $\sigma_k$ as a function on symmetric matrices, we also denote by $\sigma _k (W \left|
i \right.)$ the symmetric function with $W$ deleting the $i$-row and
$i$-column and $\sigma _k (W \left| ij \right.)$ the symmetric
function with $W$ deleting the $i,j$-rows and $i,j$-columns. Then
we have the following identities.
\begin{proposition}\label{prop2.2}
Suppose $W=(W_{ij})$ is diagonal, and $m$ is a positive integer,
then
\begin{align*}
\frac{{\partial \sigma _m (W)}} {{\partial W_{ij} }} = \begin{cases}
\sigma _{m - 1} (W\left| i \right.), &\text{if } i = j, \\
0, &\text{if } i \ne j.
\end{cases}
\end{align*}
and
\begin{align*}
\frac{{\partial ^2 \sigma _m (W)}} {{\partial W_{ij} \partial W_{kl}
}} =\begin{cases}
\sigma _{m - 2} (W\left| {ik} \right.), &\text{if } i = j,k = l,i \ne k,\\
- \sigma _{m - 2} (W\left| {ik} \right.), &\text{if } i = l,j = k,i \ne j,\\
0, &\text{otherwise }.
\end{cases}
\end{align*}
\end{proposition}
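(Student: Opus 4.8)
The plan is to prove Proposition \ref{prop2.2} by writing $\sigma_m$ in terms of the generalized Kronecker delta and differentiating formally, treating the $n^2$ entries $W_{ij}$ as independent variables. Recall that for a symmetric matrix $W=(W_{ij})$,
\[
\sigma_m(W)=\frac{1}{m!}\sum \delta^{i_1\cdots i_m}_{j_1\cdots j_m}\,W_{i_1j_1}\cdots W_{i_mj_m},
\]
where $\delta^{i_1\cdots i_m}_{j_1\cdots j_m}$ is the sign of the permutation carrying $(i_1,\dots,i_m)$ to $(j_1,\dots,j_m)$ when both tuples enumerate one and the same $m$-element subset of $\{1,\dots,n\}$ and is $0$ otherwise; equivalently $\sigma_m(W)$ is the sum of the $m\times m$ principal minors of $W$, which recovers \eqref{2.1} on diagonal matrices.

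First I would compute the first derivative. Since the $m$ factors enter symmetrically and $\delta$ is invariant under a common permutation of its upper and lower indices,
\[
\frac{\partial\sigma_m(W)}{\partial W_{pq}}=\frac{1}{(m-1)!}\sum\delta^{p\,i_2\cdots i_m}_{q\,j_2\cdots j_m}\,W_{i_2j_2}\cdots W_{i_mj_m}.
\]
Taking $W$ diagonal forces $i_r=j_r$ for $r\ge 2$; the Kronecker symbol then vanishes unless $p=q$, which already gives the off-diagonal case, whereas for $p=q$ the remaining sum ranges over pairwise distinct indices in $\{1,\dots,n\}\setminus\{p\}$ and equals $\sigma_{m-1}(W|p)$. (In the principal-minor picture the same thing reads: $W_{pq}$ occurs only in the minors indexed by subsets containing $p$ and $q$, and differentiating produces the corresponding cofactor, which for $p=q$ is again a principal minor, while for $p\ne q$ it contains a zero row once $W$ is taken diagonal.)

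Then I would differentiate a second time,
\[
\frac{\partial^2\sigma_m(W)}{\partial W_{pq}\,\partial W_{rs}}=\frac{1}{(m-2)!}\sum\delta^{p\,r\,i_3\cdots i_m}_{q\,s\,j_3\cdots j_m}\,W_{i_3j_3}\cdots W_{i_mj_m},
\]
and evaluate at diagonal $W$, which again forces $i_r=j_r$ for $r\ge 3$. Now $\delta^{p\,r\,i_3\cdots i_m}_{q\,s\,i_3\cdots i_m}$ is nonzero only when $\{p,r\}=\{q,s\}$ with $p,r,i_3,\dots,i_m$ pairwise distinct. If $p=q$, $r=s$ and $p\ne r$, the symbol is $+1$ and the residual sum equals $\sigma_{m-2}(W|pr)$; if $p=s$, $r=q$ and $p\ne r$, the upper and lower rows differ by one transposition so the symbol is $-1$, giving $-\sigma_{m-2}(W|pr)$; in all remaining configurations (in particular $p=q=r=s$, which repeats an index) the contribution is $0$. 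Identifying $(p,q,r,s)$ with $(i,j,k,l)$ yields exactly the three cases listed.

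The computation is essentially just bookkeeping with the Kronecker symbol; the one place needing genuine care is the sign in the second case, which is precisely the sign of the transposition exchanging the $(i,j)$- and $(j,i)$-slots, together with the observation that it is the diagonality of $W$ that collapses all ``mixed'' derivatives onto principal configurations. It is also worth recording that the first identity, and the subcase $i=j$, $k=l$ of the second, in fact hold for every symmetric $W$, while the vanishing assertions genuinely use that $W$ is diagonal.
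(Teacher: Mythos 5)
Your argument is correct: the Kronecker-delta representation of $\sigma_m$ as the sum of principal minors, differentiated formally with the $n^2$ entries treated as independent variables and then evaluated at diagonal $W$, yields exactly the three cases stated, and you handle the only delicate points (the sign coming from the transposition in the case $i=l$, $j=k$, and the vanishing of repeated-index configurations such as $i=j=k=l$) correctly. The paper itself offers no proof of Proposition \ref{prop2.2} — it is recorded as a standard identity — so there is nothing to compare against; your computation is a complete and standard justification of it, and your closing remark about which parts genuinely use diagonality is accurate.
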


Recall that the G{\aa}rding's cone is defined as
\begin{equation}\label{2.2}
\Gamma_k  = \{ \lambda  \in \mathbb{R}^n :\sigma _i (\lambda ) >
0,\forall 1 \le i \le k\}.
\end{equation}

The following properties are well known. %For example, see \cite{L96} or \cite{HS99} for a proof of \eqref{2.3}, \cite{L91} for \eqref{2.4}.

\begin{proposition}\label{prop2.3}
Let $\lambda \in \Gamma_k$ and $k \in \{1,2, \cdots, n\}$. Suppose that
$$
\lambda_1 \geq \cdots \geq \lambda_k \geq \cdots \geq \lambda_n,
$$
then we have
\begin{align}
\label{2.3}& \sigma_{k-1} (\lambda|n) \geq \sigma_{k-1} (\lambda|n-1) \geq \cdots \geq \sigma_{k-1} (\lambda|k) \geq \cdots \geq \sigma_{k-1} (\lambda|1) >0; \\
\label{2.4}& \lambda_1 \geq \cdots \geq \lambda_k >  0, \quad \sigma _k (\lambda)\leq C_n^k  \lambda_1 \cdots \lambda_k; \\
\label{2.5}& \sigma _k (\lambda)\geq  \lambda_1 \cdots \lambda_k, \quad \text{ if } \lambda \in \Gamma_{k+1};
\end{align}
where $C_n^k = \frac{n!}{k! (n-k)!}$.
\end{proposition}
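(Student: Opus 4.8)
The plan is to reduce all three inequalities to one basic stability property of the G{\aa}rding cones: \emph{if $\lambda\in\Gamma_k$ then the truncated vector $\lambda|i$ lies in $\Gamma_{k-1}$ for every $i$; moreover, if $\lambda_i\le 0$, then in fact $\lambda|i\in\Gamma_k$.} The first assertion is classical, and I would either cite it or prove it by induction on $k$ using the identities of Proposition~\ref{prop2.1} together with the elementary Newton--Maclaurin inequality in dimension $n-1$ and the convexity/connectedness of $\Gamma_k$. Granting it, the second assertion is immediate: for $1\le j\le k$ one has $\sigma_j(\lambda|i)=\sigma_j(\lambda)-\lambda_i\sigma_{j-1}(\lambda|i)\ge\sigma_j(\lambda)>0$, since $-\lambda_i\ge 0$ and $\sigma_{j-1}(\lambda|i)\ge 0$. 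Iterating the first assertion, deleting any $p\le k$ entries of a $\Gamma_k$-vector produces a $\Gamma_{k-p}$-vector.

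For \eqref{2.3}, I would apply the first identity of Proposition~\ref{prop2.1} twice to get, for $i<j$,
\begin{equation*}
\sigma_{k-1}(\lambda|i)-\sigma_{k-1}(\lambda|j)=(\lambda_j-\lambda_i)\,\sigma_{k-2}(\lambda|ij).
\end{equation*}
Since $\lambda_1\ge\cdots\ge\lambda_n$ we have $\lambda_j\le\lambda_i$, and $\lambda|ij\in\Gamma_{k-2}$ gives $\sigma_{k-2}(\lambda|ij)\ge 0$; hence $\sigma_{k-1}(\lambda|i)\le\sigma_{k-1}(\lambda|j)$, which chained over consecutive indices is exactly the asserted monotone chain, and the terminal positivity $\sigma_{k-1}(\lambda|1)>0$ is just the membership $\lambda|1\in\Gamma_{k-1}$.

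For \eqref{2.4}, deleting the first $k-1$ entries shows $(\lambda_k,\dots,\lambda_n)\in\Gamma_1$, so $\lambda_k+\cdots+\lambda_n>0$; as $\lambda_k$ is the largest of these, $\lambda_k>0$, hence $\lambda_1\ge\cdots\ge\lambda_k>0$. For the upper bound I would first treat nonnegative $\lambda$, where each of the $C_n^k$ monomials of $\sigma_k(\lambda)$ is at most $\lambda_1\cdots\lambda_k$; in general, whenever $\lambda$ has a negative entry $\lambda_i$ (necessarily with $i>k$) the stability property gives $\lambda|i\in\Gamma_k$ and $\sigma_k(\lambda)=\sigma_k(\lambda|i)+\lambda_i\sigma_{k-1}(\lambda|i)\le\sigma_k(\lambda|i)$, so after finitely many such deletions one is reduced to a nonnegative vector in $\Gamma_k$ whose $k$ largest entries are still $\lambda_1,\dots,\lambda_k$, and the bound follows (with a smaller binomial coefficient). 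For \eqref{2.5}, grouping the $k$-subsets $S$ in $\sigma_k(\lambda)=\sum_{|S|=k}\prod_{i\in S}\lambda_i$ by the largest initial segment $\{1,\dots,j\}$ they contain yields
\begin{equation*}
\sigma_k(\lambda)=\lambda_1\cdots\lambda_k+\sum_{j=0}^{k-1}(\lambda_1\cdots\lambda_j)\,\sigma_{k-j}\big(\lambda\,|\,1\,2\cdots(j{+}1)\big),
\end{equation*}
with $\lambda_1\cdots\lambda_0:=1$; if $\lambda\in\Gamma_{k+1}$, then \eqref{2.4} applied with $k+1$ gives $\lambda_1\ge\cdots\ge\lambda_{k+1}>0$, so the prefactors are positive, while deleting $j+1\le k$ entries of a $\Gamma_{k+1}$-vector lands in $\Gamma_{k-j}$, so every factor $\sigma_{k-j}(\cdot)$ is positive; hence $\sigma_k(\lambda)\ge\lambda_1\cdots\lambda_k$.

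Modulo the cone-stability property, every step above is routine manipulation of the identities in Proposition~\ref{prop2.1}. The one genuine point — and the step I would expect to be the main obstacle to a fully self-contained write-up — is that stability property itself, namely controlling the lower symmetric functions under deletion of a variable from a $\Gamma_k$-vector; since the statement announces these facts as well known, I would be content either to reproduce the standard induction or simply to cite it.
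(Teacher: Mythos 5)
Your proposal is correct. Note that the paper itself offers no proof of Proposition \ref{prop2.3} at all --- it simply records these facts as ``well known'' --- so there is no argument to compare yours against; what you have written is a legitimate self-contained verification. The reduction of everything to the cone-stability property (that $\lambda\in\Gamma_k$ forces $\lambda|i\in\Gamma_{k-1}$, and $\lambda|i\in\Gamma_k$ when $\lambda_i\le 0$) is sound: the identity $\sigma_{k-1}(\lambda|i)-\sigma_{k-1}(\lambda|j)=(\lambda_j-\lambda_i)\sigma_{k-2}(\lambda|ij)$ gives \eqref{2.3}; $(\lambda_k,\dots,\lambda_n)\in\Gamma_1$ gives $\lambda_k>0$; discarding the non-positive entries (whose indices exceed $k$, so the top $k$ entries survive and the binomial coefficient only shrinks) reduces the upper bound in \eqref{2.4} to the trivial case of nonnegative entries; and your grouping of $k$-subsets by their maximal initial segment, with all prefactors and truncated $\sigma_{k-j}$'s positive when $\lambda\in\Gamma_{k+1}$, gives \eqref{2.5}. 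The one deferred ingredient is genuinely classical and fair to cite: it is the statement that $\partial\sigma_k/\partial\lambda_i=\sigma_{k-1}(\lambda|i)>0$ on $\Gamma_k$, which follows from G{\aa}rding's theory of hyperbolic polynomials or the standard induction found in the references the paper already uses (e.g.\ \cite{S05}), exactly as you indicate.
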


The generalized Newton-MacLaurin inequality is as follows, which will be used all the time.
See \cite{S05}.
\begin{proposition}\label{prop2.4}
For $\lambda \in \Gamma_k$ and $k > l \geq 0$, $ r > s \geq 0$, $k \geq r$, $l \geq s$, we have
\begin{align} \label{2.6}
\Bigg[\frac{{\sigma _k (\lambda )}/{C_n^k }}{{\sigma _l (\lambda )}/{C_n^l }}\Bigg]^{\frac{1}{k-l}}
\le \Bigg[\frac{{\sigma _r (\lambda )}/{C_n^r }}{{\sigma _s (\lambda )}/{C_n^s }}\Bigg]^{\frac{1}{r-s}}.
\end{align}
\end{proposition}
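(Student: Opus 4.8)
\emph{Proof proposal.} The plan is to deduce \eqref{2.6} from Newton's inequalities together with an elementary property of concave sequences. For $0\le m\le n$ write $p_m:=\sigma_m(\lambda)/C_n^m$. The one fact that requires genuine input is Newton's inequality
\[
p_m^2\ \ge\ p_{m-1}\,p_{m+1},\qquad 1\le m\le n-1,
\]
valid for \emph{every} $\lambda\in\mathbb{R}^n$: the polynomial $P(t)=\prod_{i=1}^n(1+\lambda_i t)=\sum_{j=0}^n C_n^j\,p_j\,t^j$ has only real roots, hence so do all its derivatives by Rolle's theorem, and iterating Rolle together with coefficient reversal reduces any three consecutive $p_{m-1},p_m,p_{m+1}$ to the coefficients of a real-rooted quadratic, for which the inequality is just the non-negativity of the discriminant; a detailed argument is in \cite{S05}.

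\medskip

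Next I would restrict to $\lambda\in\Gamma_k$, so that $p_0=1>0$ and $p_1,\dots,p_k>0$, and set $a_m:=\log p_m$ for $0\le m\le k$. Newton's inequality then reads $a_{m-1}+a_{m+1}\le 2a_m$, i.e. $(a_m)_{m=0}^k$ is a concave sequence. The elementary step is the following: for a concave sequence the difference quotient
\[
\Delta(i,j):=\frac{a_j-a_i}{j-i},\qquad 0\le i<j\le k,
\]
is non-increasing in each endpoint, that is, $\Delta(i,j)\ge\Delta(i',j')$ whenever $i\le i'$ and $j\le j'$ (with $i<j$ and $i'<j'$). One checks this by moving one endpoint at a time: concavity gives $a_{i'}\ge a_i+\Delta(i,j)(i'-i)$ when $i\le i'<j$, whence $\Delta(i',j)\le\Delta(i,j)$, and symmetrically $\Delta(i,j')\le\Delta(i,j)$ when $i<j\le j'$; the case of disjoint index intervals reduces to these two by inserting an intermediate index and using that consecutive difference quotients of a concave sequence are non-increasing.

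\medskip

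Finally I would apply this with $(i,j)=(s,r)$ and $(i',j')=(l,k)$. The hypotheses $k>l\ge 0$, $r>s\ge 0$, $k\ge r$ and $l\ge s$ say exactly that $0\le s<r\le k$, $0\le l<k$, $s\le l$ and $r\le k$, so all four indices lie in $\{0,\dots,k\}$ and $\Delta(s,r)\ge\Delta(l,k)$, i.e.
\[
\frac{1}{r-s}\log\frac{p_r}{p_s}\ \ge\ \frac{1}{k-l}\log\frac{p_k}{p_l}.
\]
Exponentiating and substituting $p_m=\sigma_m(\lambda)/C_n^m$ gives precisely \eqref{2.6}. The main obstacle is really the first step, Newton's inequality --- equivalently the log-concavity of $m\mapsto\sigma_m(\lambda)/C_n^m$ on $\Gamma_k$; once that is in hand, the passage to the two-parameter Newton--MacLaurin inequality is just the endpoint monotonicity of secant slopes of a concave sequence and is routine.
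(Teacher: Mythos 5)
Your argument is correct, but note that the paper itself gives no proof of Proposition \ref{prop2.4}: it is stated as a known result with a pointer to \cite{S05}, so there is no ``paper proof'' to match. Your route is essentially the standard derivation found in such references: Newton's inequality $p_m^2\ge p_{m-1}p_{m+1}$ for $p_m=\sigma_m(\lambda)/C_n^m$, then log-concavity of $m\mapsto p_m$ on the index range where positivity is available, then the monotonicity of secant slopes of a concave sequence with $(i,j)=(s,r)$, $(i',j')=(l,k)$. The two points that need care are handled correctly: (i) taking logarithms requires $p_m>0$ for $0\le m\le k$, and this is exactly what $\lambda\in\Gamma_k$ gives, with all four indices $s\le l<k$, $s<r\le k$ lying in $\{0,\dots,k\}$, so you never use positivity beyond index $k$; (ii) Newton's inequality itself holds for arbitrary real $\lambda$, and your Rolle-plus-reversal sketch is the classical proof --- the only delicate case is when the reversed polynomial drops degree (e.g. several $\lambda_i=0$), which is resolved by a perturbation or by the detailed treatment you cite, so deferring it to \cite{S05} is reasonable, especially since the paper defers the entire proposition to the same source. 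In short: correct, self-contained modulo the classical Newton inequality, and in line with the standard proof the paper implicitly relies on; what your write-up adds over the paper is an explicit reduction of the two-parameter inequality \eqref{2.6} to plain log-concavity, which makes transparent why the index conditions $k>l\ge0$, $r>s\ge0$, $k\ge r$, $l\ge s$ are exactly the right ones.
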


\subsection{Hypersurfaces in $\mathbb{S}^{n+1}$}

%Set \begin{eqnarray} \label{1.2}
%\Phi (\rho ) = \int_0^\rho  \phi (r)dr \end{eqnarray}

The following lemma is well known, e.g. \cite{GL15}.
\begin{lemma}\label{lem2.5}
Let $M^n \subset \mathbb{S}^{n+1}$ be a closed hypersurface with induced metric $g$. Let $\Phi=\Phi (\rho ) = \int_0^\rho  \phi (r)dr$, then
\begin{align} \label{2.7}
\nabla _i \nabla _j \Phi  = \phi '(\rho )g_{ij}  - h_{ij}  u,
\end{align}
recall $u=\langle\phi(\rho) \frac{\partial }{\partial \rho}, \nu\rangle$.
\end{lemma}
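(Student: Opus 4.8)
The plan is to derive the Hessian identity for $\Phi$ by a direct computation in the ambient sphere, using the defining relation $\Phi'(\rho)=\phi(\rho)$ together with the standard structure equations for a hypersurface in a space form. First I would recall that the position vector field $V := \phi(\rho)\frac{\partial}{\partial\rho}$ is a conformal Killing field on $\mathbb{S}^{n+1}$: since $\phi=\sin\rho$ solves $\phi''=-\phi$, the vector field $V$ satisfies $\bar\nabla_{Y} V = \phi'(\rho) Y$ for every tangent vector $Y$ to $\mathbb{S}^{n+1}$, where $\bar\nabla$ is the ambient connection. Equivalently, $\Phi=\int_0^\rho\phi$ restricted to $\mathbb{S}^{n+1}$ has ambient Hessian $\bar\nabla^2\Phi = \phi'(\rho)\,\bar g$, because $\bar\nabla\Phi = \phi(\rho)\,d\rho^\sharp = V$ and the previous identity. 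This is the one nontrivial input, and it is classical for space forms (it is exactly the statement that geodesic spheres are umbilic and $\Phi$ is the relevant potential function); I would either cite it or verify it in one line from $\phi''+\phi=0$.

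Next I would restrict to $M$. For a function $f$ on the ambient space, its intrinsic Hessian on $M$ relates to the ambient Hessian by the Gauss formula $\nabla_i\nabla_j (f|_M) = \bar\nabla^2 f(\partial_i,\partial_j) + \langle \bar\nabla f,\nu\rangle\, h_{ij}$, where $h_{ij}$ is the second fundamental form with respect to the outward normal $\nu$ and the sign convention is fixed so that this matches the conventions used elsewhere in the paper (in particular so that geodesic spheres have positive principal curvatures). Applying this with $f=\Phi$ and using $\bar\nabla^2\Phi = \phi'(\rho)\bar g$ gives $\nabla_i\nabla_j\Phi = \phi'(\rho) g_{ij} + \langle\bar\nabla\Phi,\nu\rangle h_{ij}$. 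Since $\bar\nabla\Phi = \phi(\rho)\frac{\partial}{\partial\rho}$, we have $\langle\bar\nabla\Phi,\nu\rangle = \langle\phi(\rho)\frac{\partial}{\partial\rho},\nu\rangle = u$, so the claimed formula $\nabla_i\nabla_j\Phi = \phi'(\rho)g_{ij} - h_{ij}u$ follows once the sign of $h_{ij}$ in the Gauss formula is reconciled with the paper's convention; this is just bookkeeping, and the minus sign indicates that $h_{ij}$ here is taken with the inward-pointing orientation relative to the naive Gauss formula, or equivalently that $\nu$ is outward.

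The only real obstacle is making the ambient Hessian computation $\bar\nabla^2\Phi=\phi'\,\bar g$ airtight with the correct normalization. In polar coordinates $ds^2 = d\rho^2 + \phi(\rho)^2 dz^2$ one computes the Christoffel symbols directly: $\bar\Gamma^\rho_{ab} = -\phi\phi' (dz^2)_{ab}$ and $\bar\Gamma^a_{\rho b} = \frac{\phi'}{\phi}\delta^a_b$ for the spherical directions, and then $\bar\nabla^2\Phi = \phi''\, d\rho^2 + \phi\phi'\, dz^2 = \phi'(d\rho^2+\phi^2 dz^2) = \phi'\bar g$ using $\phi''=-\phi=\phi'\cdot\frac{\phi''}{\phi'}$... more cleanly, $\phi''d\rho^2 + \phi\phi'dz^2 = \phi'\bar g$ precisely because $\phi''/\phi' \ne \phi'$ in general, so one instead notes $\bar\nabla^2\Phi(\partial_\rho,\partial_\rho)=\Phi''=\phi'$ and $\bar\nabla^2\Phi(\partial_a,\partial_b) = -\bar\Gamma^\rho_{ab}\Phi' = \phi\phi'(dz^2)_{ab} = \phi'\bar g_{ab}$, which together give $\bar\nabla^2\Phi=\phi'\bar g$ directly without invoking $\phi''+\phi=0$ at all — the identity $\bar\nabla^2\Phi=\phi'\bar g$ holds purely from the warped-product structure. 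I expect the write-up to be short: state the warped-product Hessian identity, invoke the Gauss equation, identify $u$, and fix signs. Since the lemma is quoted as well known with a reference to \cite{GL15}, a brief indication along these lines suffices rather than a fully detailed proof.
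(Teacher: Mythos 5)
Your argument is correct in substance, and in fact the paper offers no proof of this lemma at all -- it is quoted as well known with a citation to \cite{GL15} -- so the warped-product computation you give is exactly the standard argument behind the citation: $\bar\nabla\Phi=\phi(\rho)\partial_\rho$, the warped-product identity $\bar\nabla^2\Phi=\phi'(\rho)\bar g$ (which, as you correctly observe in your last paragraph, needs only $\Phi'=\phi$ and the form $d\rho^2+\phi^2dz^2$, not $\phi''+\phi=0$), and restriction to $M$ via the Gauss formula. Two points of bookkeeping should be tightened rather than waved at. First, the sign: with the paper's convention that $\nu$ is outward and geodesic spheres have positive principal curvatures, one has $h_{ij}=\langle\bar\nabla_{e_i}\nu,e_j\rangle$, hence $\bar\nabla_{e_i}e_j=\nabla_{e_i}e_j-h_{ij}\nu$, and the restriction formula comes out as
\begin{align*}
\nabla_i\nabla_j(f|_M)=\bar\nabla^2f(e_i,e_j)-h_{ij}\,\langle\bar\nabla f,\nu\rangle,
\end{align*}
with a minus sign already in place; plugging in $f=\Phi$ then gives $\nabla_i\nabla_j\Phi=\phi'g_{ij}-u\,h_{ij}$ directly, with no reconciliation step needed. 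As written, you state the formula with a plus sign immediately after declaring the positive-curvature convention, which is internally inconsistent even though you flag it; since the sign is the entire content of the lemma's second term, it should be pinned down, not deferred. Second, in the coordinate verification the angular component is $\bar\nabla^2\Phi(\partial_a,\partial_b)=-\bar\Gamma^\rho_{ab}\,\Phi'=\phi^2\phi'(dz^2)_{ab}=\phi'\bar g_{ab}$; your intermediate expression $\phi\phi'(dz^2)_{ab}$ drops a factor of $\phi$ (and the earlier line with $\phi''\,d\rho^2$ should read $\Phi''=\phi'$), though your final conclusion is the right one. With these small corrections the proof is complete and is the natural self-contained substitute for the paper's citation.
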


\medskip

We have the following Hsiung-Minkowski identities, see \cite{GL15}.

\begin{proposition}\label{prop2.6}
Let $M$ be a closed hypersurface in $\mathbb{S}^{n+1}$. Then, for $m =0, 1, \cdots, n-1$,
\begin{align} \label{2.8}
(m + 1)\int_M {u\sigma _{m + 1} (\lambda )}  = (n - m)\int_M { \phi' (\rho )\sigma _m (\lambda )},
\end{align}
where we use the convention that $\sigma_0 = 1$.
\end{proposition}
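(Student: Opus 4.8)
The plan is to prove the Hsiung--Minkowski identities \eqref{2.8} by integrating a suitable divergence over $M$. The key observation is Lemma \ref{lem2.5}: since $\nabla_i\nabla_j\Phi = \phi'(\rho)g_{ij} - h_{ij}u$, contracting with the Newton tensor of order $m$ associated to the Weingarten map $W=(h^i_j)$ will produce exactly the two integrands appearing in \eqref{2.8}. Concretely, let $T_{(m)}$ denote the $m$-th Newton transformation, i.e. in a principal frame $(T_{(m)})^i_i = \sigma_m(\lambda|i)$, so that $\mathrm{tr}\,T_{(m)} = (n-m)\sigma_m(\lambda)$ and $\sum_i h_{ii}(T_{(m)})^i_i = (m+1)\sigma_{m+1}(\lambda)$ by Proposition \ref{prop2.1}. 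Then I would contract \eqref{2.7} against $T_{(m)}$:
\begin{align*}
(T_{(m)})^{ij}\nabla_i\nabla_j\Phi = \phi'(\rho)(n-m)\sigma_m(\lambda) - (m+1)u\,\sigma_{m+1}(\lambda).
\end{align*}

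Next I would use the classical fact that each Newton transformation $T_{(m)}$ is divergence-free on a hypersurface of a space form; this is a standard consequence of the Codazzi equations (the ambient space $\mathbb{S}^{n+1}$ has constant curvature, so the curvature terms that would otherwise obstruct $\nabla_i (T_{(m)})^{ij}=0$ vanish). Granting this, the left-hand side of the contracted identity can be rewritten as a total divergence,
\begin{align*}
(T_{(m)})^{ij}\nabla_i\nabla_j\Phi = \nabla_i\left((T_{(m)})^{ij}\nabla_j\Phi\right),
\end{align*}
and integrating over the closed manifold $M$ kills it by the divergence theorem. What remains is precisely
\begin{align*}
0 = (n-m)\int_M \phi'(\rho)\sigma_m(\lambda) - (m+1)\int_M u\,\sigma_{m+1}(\lambda),
\end{align*}
which is \eqref{2.8}. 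The cases $m=0$ (where $T_{(0)}=\mathrm{id}$, $\sigma_0=1$) and general $m$ are handled uniformly by this argument, consistent with the stated convention $\sigma_0=1$.

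I expect the only real technical point to be the verification that $\mathrm{div}\,T_{(m)}=0$ in $\mathbb{S}^{n+1}$. This follows by differentiating the recursion $T_{(m)} = \sigma_m(\lambda)\,\mathrm{id} - W\cdot T_{(m-1)}$ (equivalently $(T_{(m)})^i_j = \sigma_m \delta^i_j - h^i_k (T_{(m-1)})^k_j$) and using the Codazzi equation $\nabla_i h_{jk} = \nabla_j h_{ik}$, which in $\mathbb{S}^{n+1}$ holds exactly because the ambient curvature tensor is a multiple of the metric and hence its tangential-normal components vanish; an induction on $m$ then closes the argument, the base case $T_{(0)}=\mathrm{id}$ being trivially divergence-free. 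Alternatively, one can bypass the Newton-tensor formalism and instead differentiate $\sigma_{m+1}$ directly, writing $\sigma_{m+1}(\lambda) = \tfrac{1}{m+1}\frac{\partial\sigma_{m+1}}{\partial h_{ij}}h_{ij}$ and using Proposition \ref{prop2.2} together with Codazzi to integrate by parts; this is the route taken in \cite{GL15} and I would cite it for the details while recording the computation above as the conceptual skeleton.
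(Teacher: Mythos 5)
Your argument is correct and is essentially the same as the one in the cited reference \cite{GL15}, which the paper invokes without proof: contract the identity of Lemma \ref{lem2.5} with the $m$-th Newton tensor (equivalently with $\partial\sigma_{m+1}/\partial h_{ij}$), use its divergence-freeness via Codazzi in the space form, and integrate by parts over the closed hypersurface. The algebraic traces $\mathrm{tr}\,T_{(m)}=(n-m)\sigma_m$ and $\sum_i\lambda_i\sigma_m(\lambda|i)=(m+1)\sigma_{m+1}$ are exactly as in Proposition \ref{prop2.1}, so no gap remains.
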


Next, we state the gradient and hessian of the support function $u=\langle\phi(\rho) \frac{\partial }{\partial \rho}, \nu\rangle$ under the induced
metric $g$ on $M$, see \cite{GL15}.

\begin{lemma}\label{lem2.7}
The support function $u$ satisfies
\begin{align} 
\nabla _i u =& g^{ml} h_{im} \nabla _l \Phi,  \\
\nabla _i \nabla _j u =& g^{ml} \nabla _m h_{ij} \nabla _l \Phi  + \phi 'h_{ij}  - (h^2 )_{ij} u,
\end{align}
where $(h^2 )_{ij}  = g^{ml} h_{im} h_{jl}$.
\end{lemma}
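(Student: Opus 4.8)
The plan is to pass to the ambient geometry of $\mathbb{S}^{n+1}$ and then project onto $M$. First I would observe that $\phi(\rho)\frac{\partial}{\partial\rho}$ is precisely the ambient gradient $\ol\nabla\Phi$: since $\Phi'(\rho)=\phi(\rho)$ and $\frac{\partial}{\partial\rho}=\ol\nabla\rho$ is a unit vector field, we get $\ol\nabla\Phi=\phi(\rho)\frac{\partial}{\partial\rho}$, where $\ol\nabla$ denotes the Levi--Civita connection of $\mathbb{S}^{n+1}$. Hence $u=\langle\ol\nabla\Phi,\nu\rangle$, and along $M$ there is the orthogonal splitting $\ol\nabla\Phi=\nabla\Phi+u\nu$, where $\nabla_i\Phi=\langle\ol\nabla\Phi,e_i\rangle$ is the tangential gradient that already appears in Lemma \ref{lem2.5}. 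I would also record the ambient Hessian identity $\ol\nabla^2\Phi=\phi'(\rho)\,\ol g$, which is a short computation in the warped product metric $d\rho^2+\phi^2dz^2$ using $\Phi''=\phi'$; restricting this identity to $TM$ via the Gauss formula is exactly how \eqref{2.7} is obtained, so one could alternatively just quote Lemma \ref{lem2.5} for the part with two tangential slots.

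For the gradient of $u$ I would differentiate $u=\langle\ol\nabla\Phi,\nu\rangle$ along a tangent vector $e_i$, obtaining
\[
\nabla_i u=\langle\ol\nabla_{e_i}\ol\nabla\Phi,\nu\rangle+\langle\ol\nabla\Phi,\ol\nabla_{e_i}\nu\rangle=\ol\nabla^2\Phi(e_i,\nu)+g^{ml}h_{im}\langle\ol\nabla\Phi,e_l\rangle,
\]
where I used the Weingarten relation $\ol\nabla_{e_i}\nu=g^{ml}h_{im}e_l$ in the sign convention compatible with \eqref{2.7}. The first summand vanishes since $\ol\nabla^2\Phi(e_i,\nu)=\phi'\,\ol g(e_i,\nu)=0$, while the second is $g^{ml}h_{im}\nabla_l\Phi$, which is the first claimed identity.

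For the Hessian I would covariantly differentiate $\nabla_i u=g^{ml}h_{im}\nabla_l\Phi$ in the direction $j$ and use $\nabla g=0$:
\[
\nabla_j\nabla_i u=g^{ml}(\nabla_j h_{im})\nabla_l\Phi+g^{ml}h_{im}\nabla_j\nabla_l\Phi.
\]
Substituting \eqref{2.7}, $\nabla_j\nabla_l\Phi=\phi'g_{jl}-h_{jl}u$, turns the second term into $\phi'h_{ij}-(h^2)_{ij}u$ with $(h^2)_{ij}=g^{ml}h_{im}h_{jl}$. For the first term I would invoke the Codazzi equation: because $\mathbb{S}^{n+1}$ has constant sectional curvature, $\nabla h$ is totally symmetric, so $\nabla_j h_{im}=\nabla_m h_{ij}$ and the term becomes $g^{ml}\nabla_m h_{ij}\nabla_l\Phi$. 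Adding the pieces gives the stated formula.

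I do not expect a genuine obstacle here: the argument is essentially bookkeeping. The two points that require care are fixing the sign conventions for the second fundamental form and the shape operator consistently with \eqref{2.7}, and noticing that the symmetrization $\nabla_j h_{im}=\nabla_m h_{ij}$ used in the Hessian computation is exactly where the constant curvature of the ambient sphere enters.
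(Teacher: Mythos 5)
Your argument is correct: the identification $\ol\nabla\Phi=\phi(\rho)\partial_\rho$ with $\ol\nabla^2\Phi=\phi'\,\ol g$, the Weingarten relation, Lemma \ref{lem2.5}, and the Codazzi symmetry $\nabla_j h_{im}=\nabla_m h_{ij}$ (valid since $\mathbb{S}^{n+1}$ has constant curvature) are exactly the ingredients needed, and your sign conventions are consistent with \eqref{2.7}. The paper itself gives no proof here, citing \cite{GL15}, and your computation is the standard derivation found there, so there is nothing to flag.
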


\subsection{Evolution equations}

Let $M(t)$ be a smooth family of closed hypersurfaces in $\mathbb{S}^{n+1}$, and $X(\cdot, t)$ denote a point on $M(t)$. The following basic evolution equations for normal variations are well known, e.g. \cite{G06}.

\begin{proposition} \label{prop2.8}
Under the flow $\partial_t X = f(X(\cdot ,t)) \nu$ in the sphere
we have the following evolution equations
\begin{align}
 \partial _t g_{ij}  =& 2fh_{ij},  \\
 \partial _t d\mu _g  =& f\sigma_1(\lambda)d\mu _g,  \\
 \partial _t h_{ij}  =&  - \nabla _i \nabla _j f + f(h^2 )_{ij}  - fg_{ij},  \\
 \label{EV-W}\partial _t h_j^i  =&  - \nabla ^i \nabla _j f - fg^{im} (h^2 )_{mj}  - f\delta _j^i.
\end{align}

\end{proposition}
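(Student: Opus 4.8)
The plan is to derive all four identities directly from the Gauss--Weingarten relations, using that $\mathbb{S}^{n+1}$ has constant sectional curvature $1$, so that its ambient curvature tensor is $\overline{R}(A,B)C=\langle B,C\rangle A-\langle A,C\rangle B$. Throughout I write $\overline{\nabla}$ for the Levi-Civita connection of $\mathbb{S}^{n+1}$ (and the induced connection along $X$), and adopt the conventions $\overline{\nabla}_{\partial_i}\partial_jX=\Gamma_{ij}^k\partial_kX-h_{ij}\nu$ and $\overline{\nabla}_{\partial_i}\nu=h_i^k\partial_kX$, so that $h_{ij}=-\langle\overline{\nabla}_{\partial_i}\partial_jX,\nu\rangle$. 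Since $\partial_t$ and $\partial_i$ are coordinate fields on $M\times[0,T)$ one has $[\partial_t,\partial_i]=0$, hence $\overline{\nabla}_{\partial_t}\partial_iX=\overline{\nabla}_{\partial_i}(\partial_tX)=\overline{\nabla}_{\partial_i}(f\nu)=(\partial_if)\nu+fh_i^k\partial_kX$, which I will use repeatedly.

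First I would treat the metric: differentiating $g_{ij}=\langle\partial_iX,\partial_jX\rangle$ in $t$ and pairing with $\partial_jX$, only the tangential part of $\overline{\nabla}_{\partial_t}\partial_iX$ contributes and one gets $\partial_tg_{ij}=2fh_{ij}$. Contracting with $g^{ij}$ and using $\partial_t\sqrt{\det g}=\tfrac12\sqrt{\det g}\,g^{ij}\partial_tg_{ij}$ together with $g^{ij}h_{ij}=\sigma_1(\lambda)$ yields $\partial_td\mu_g=f\sigma_1(\lambda)d\mu_g$; differentiating $g^{ik}g_{kj}=\delta^i_j$ gives $\partial_tg^{ij}=-2fh^{ij}$, needed later. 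For the unit normal, $|\nu|^2\equiv1$ forces $\langle\overline{\nabla}_{\partial_t}\nu,\nu\rangle=0$, while $\langle\nu,\partial_iX\rangle\equiv0$ gives $\langle\overline{\nabla}_{\partial_t}\nu,\partial_iX\rangle=-\langle\nu,\overline{\nabla}_{\partial_t}\partial_iX\rangle=-\partial_if$, so that $\overline{\nabla}_{\partial_t}\nu=-g^{ij}(\partial_if)\partial_jX$ is minus the tangential gradient of $f$; in particular it is purely tangential.

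The main computation is the second fundamental form. Differentiating $h_{ij}=-\langle\overline{\nabla}_{\partial_i}\partial_jX,\nu\rangle$ gives
\[
\partial_th_{ij}=-\langle\overline{\nabla}_{\partial_t}\overline{\nabla}_{\partial_i}\partial_jX,\nu\rangle-\langle\overline{\nabla}_{\partial_i}\partial_jX,\overline{\nabla}_{\partial_t}\nu\rangle.
\]
In the first term I commute the connections, $\overline{\nabla}_{\partial_t}\overline{\nabla}_{\partial_i}\partial_jX=\overline{\nabla}_{\partial_i}\overline{\nabla}_{\partial_t}\partial_jX+\overline{R}(\partial_tX,\partial_iX)\partial_jX$; substituting $\overline{\nabla}_{\partial_t}\partial_jX=(\partial_jf)\nu+fh_j^k\partial_kX$, applying $\overline{\nabla}_{\partial_i}$ and pairing with $\nu$ produces $\partial_i\partial_jf-f(h^2)_{ij}$, while the curvature term contributes $\langle\overline{R}(\partial_tX,\partial_iX)\partial_jX,\nu\rangle=f\langle\overline{R}(\nu,\partial_iX)\partial_jX,\nu\rangle=fg_{ij}$. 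For the second term, $\overline{\nabla}_{\partial_t}\nu$ is tangential, so only the $\Gamma_{ij}^k\partial_kX$ part of $\overline{\nabla}_{\partial_i}\partial_jX$ matters, yielding $+\Gamma_{ij}^k\partial_kf$, which is precisely the Christoffel correction upgrading $\partial_i\partial_jf$ to the covariant Hessian $\nabla_i\nabla_jf$. Collecting everything, $\partial_th_{ij}=-\nabla_i\nabla_jf+f(h^2)_{ij}-fg_{ij}$. Finally, from $h^i_j=g^{ik}h_{kj}$ and $\partial_tg^{ik}=-2fh^{ik}$ one gets $\partial_th^i_j=-2f(h^2)^i_j+g^{ik}\bigl(-\nabla_k\nabla_jf+f(h^2)_{kj}-fg_{kj}\bigr)$, and since $h^{ik}h_{kj}=g^{ik}(h^2)_{kj}=(h^2)^i_j$ this simplifies to $\partial_th^i_j=-\nabla^i\nabla_jf-fg^{im}(h^2)_{mj}-f\delta^i_j$.

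Everything here is routine bookkeeping; the only step demanding care is the third one, where one must retain the Christoffel symbols until they assemble into covariant derivatives, keep careful track of tangential versus normal parts, and correctly handle the ambient curvature of $\mathbb{S}^{n+1}$ — the terms $-fg_{ij}$ and $-f\delta^i_j$ are exactly that curvature contribution and are the only difference from the corresponding formulas in $\mathbb{R}^{n+1}$.
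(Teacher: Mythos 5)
Your derivation is correct: the Gauss--Weingarten conventions you fix ($\overline{\nabla}_{\partial_i}\partial_jX=\Gamma_{ij}^k\partial_kX-h_{ij}\nu$, $\overline{\nabla}_{\partial_i}\nu=h_i^k\partial_kX$) are consistent with the paper's (outward normal, $h_{ij}>0$ on convex hypersurfaces), the commutation with $\overline R(A,B)C=\langle B,C\rangle A-\langle A,C\rangle B$ produces exactly the extra terms $-fg_{ij}$ and $-f\delta^i_j$, and the passage to $h^i_j$ via $\partial_tg^{ik}=-2fh^{ik}$ is handled correctly. The paper itself gives no proof of Proposition \ref{prop2.8}, deferring to \cite{G06}; your computation is precisely the standard first-principles argument found there, so there is nothing further to reconcile.
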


From Proposition \ref{prop2.8}, we can obtain the evolution of the quermassintegrals in the sphere.
\begin{proposition}\label{prop2.9}
Along the flow $\partial_t X = f(X(\cdot ,t)) \nu$ in the sphere,
we have for $0 \leq l \leq n-1$
\begin{align} \label{2.15}
\partial _t \mathcal{A}_l  = (l + 1)\int_M {\sigma _{l + 1}(\lambda) f d\mu _g },
\end{align}
and
\begin{align} \label{2.16}
\partial _t \mathcal{A}_{-1} = \int_M {fd\mu _g },
\end{align}
where $\Omega$ is the domain enclosed by the closed hypersurface. Moreover, if the flow is \eqref{1.3} and $M(t)$ is strictly convex, then we have
\begin{align} \label{2.17}
\partial _t \mathcal{A}_l \left\{ \begin{array}{l}
  \ge 0,\quad \text{ if } l < k-1; \\
  = 0,\quad \text{ if } l = k-1; \\
  \le 0,\quad \text{ if } l > k-1. \\
 \end{array} \right.
\end{align}

\end{proposition}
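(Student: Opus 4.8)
The plan is to establish the three displays in turn. Note that \eqref{2.15} and \eqref{2.16} hold for an arbitrary normal variation $\partial_tX=f\nu$, and only \eqref{2.17} uses the special form of \eqref{1.3} together with convexity. The key preliminary is the first variation formula
\[
\partial_t\int_M\sigma_m(\lambda)\,d\mu_g=\int_M\bigl[(m+1)\sigma_{m+1}(\lambda)-(n-m+1)\sigma_{m-1}(\lambda)\bigr]f\,d\mu_g,\qquad 0\le m\le n,
\]
with the convention $\sigma_{-1}=0$. To prove it I would view $\sigma_m$ as a function of the Weingarten operator and differentiate: by Proposition~\ref{prop2.2}, at a point where $h^i_j$ is diagonal $\partial_t\sigma_m=\sum_i\sigma_{m-1}(\lambda|i)\,\partial_th^i_i$, and \eqref{EV-W} expresses $\partial_th^i_i$ through $-\nabla^i\nabla_if$, $-f\lambda_i^2$ and $-f$. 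The second-order term is harmless, since the Newton tensor $\frac{\partial\sigma_m}{\partial h_{ij}}$ is divergence free -- a consequence of the Codazzi equations, which in the space form $\mathbb{S}^{n+1}$ read $\nabla_ih_{jk}=\nabla_jh_{ik}$ -- so that $\int_M\frac{\partial\sigma_m}{\partial h_{ij}}\nabla_i\nabla_jf\,d\mu_g=0$ after integrating by parts twice. The rest is bookkeeping with Proposition~\ref{prop2.1}, via $\sum_i\sigma_{m-1}(\lambda|i)=(n-m+1)\sigma_{m-1}$ and $\sum_i\lambda_i^2\sigma_{m-1}(\lambda|i)=\sigma_1\sigma_m-(m+1)\sigma_{m+1}$, together with $\partial_td\mu_g=f\sigma_1\,d\mu_g$.

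Given this formula, \eqref{2.16} is the first variation of volume, $\partial_t\mathrm{Vol}(\Omega)=\int_Mf\,d\mu_g$ (see e.g. \cite{G06}), and \eqref{2.15} follows by induction on $l$. The case $l=0$ is immediate; for $l=1$ the extra summand $n\,\mathrm{Vol}(\Omega)$ in the definition of $\mathcal{A}_1$ is exactly what is needed to cancel the term $-n\int_Mf$ produced by the first variation formula with $m=1$; and for $2\le m\le n$ one differentiates $\mathcal{A}_m=\int_M\sigma_m+\frac{n-m+1}{m-1}\mathcal{A}_{m-2}$, plugs the first variation formula into the first summand and the inductive hypothesis $\partial_t\mathcal{A}_{m-2}=(m-1)\int_M\sigma_{m-1}f$ into the second, and observes that the $(n-m+1)\sigma_{m-1}$ contributions cancel, leaving $\partial_t\mathcal{A}_m=(m+1)\int_M\sigma_{m+1}f$. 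Thus the recursive definition of the quermassintegrals is tailored precisely for this cancellation.

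For \eqref{2.17} put $f=c_{n,k}\phi'(\rho)-u\,\sigma_{k+1}(\lambda)/\sigma_k(\lambda)$ and recall that strict convexity gives $\lambda\in\Gamma_n$ and that, $M(t)$ remaining inside the hemisphere determined by $p$, $u>0$. The preserved case $l=k-1$ is immediate: by \eqref{2.15}, $\partial_t\mathcal{A}_{k-1}=k\int_M(c_{n,k}\phi'\sigma_k-u\sigma_{k+1})$, which vanishes by the Hsiung--Minkowski identity \eqref{2.8} with $m=k$ since $c_{n,k}=\frac{n-k}{k+1}$. For $-1\le l\le n-2$ I would combine \eqref{2.15} (or \eqref{2.16} when $l=-1$) with \eqref{2.8} of index $l+1$ to rewrite $\int_M\phi'\sigma_{l+1}$ as a positive multiple of $\int_Mu\sigma_{l+2}$; factoring out $u>0$ then reduces the sign of $\partial_t\mathcal{A}_l$ to the pointwise comparison of $\frac{\sigma_{l+2}/C_n^{l+2}}{\sigma_{l+1}/C_n^{l+1}}$ with $\frac{\sigma_{k+1}/C_n^{k+1}}{\sigma_k/C_n^k}$, and by the generalized Newton--MacLaurin inequality (Proposition~\ref{prop2.4}) these are ordered according as $l+1\lessgtr k$, i.e. according as $l\lessgtr k-1$ -- exactly the asserted trichotomy.

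The genuinely delicate point is the top index $l=n-1$: there $\partial_t\mathcal{A}_{n-1}=n\int_M\sigma_nf$ and the Hsiung--Minkowski identity of index $l+1=n$ is degenerate, so $\int_M\phi'\sigma_n$ cannot be turned into a $u$-weighted integral and the reduction above fails. Here one must instead integrate $\int_M\sigma_nf$ by parts directly against the Newton tensor $\frac{\partial\sigma_n}{\partial h_{ij}}$, using the Hessian identity $\nabla_i\nabla_j\Phi=\phi'g_{ij}-uh_{ij}$ of Lemma~\ref{lem2.5} to trade the curvature integral for one of the form $\int_M\frac{\partial\sigma_n}{\partial h_{ij}}\nabla_iF\,\nabla_j\Phi$ with $F=\sigma_{k+1}/\sigma_k$, and then control the sign of this term using convexity and the relation $\nabla_iu=h_i^l\nabla_l\Phi$ from Lemma~\ref{lem2.7}; this is the main obstacle, everything else being routine.
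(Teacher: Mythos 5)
Your derivation of \eqref{2.15} and \eqref{2.16} (first variation of $\int_M\sigma_m$ via Proposition \ref{prop2.2}, \eqref{EV-W}, the divergence-free Newton tensor and the recursion defining $\mathcal A_m$) is exactly what the paper means by ``follows directly from Proposition \ref{prop2.8}'', and your treatment of \eqref{2.17} for $l=k-1$ and for $-1\le l\le n-2$ (Hsiung--Minkowski \eqref{2.8} at index $l+1$ to trade $\int\phi'\sigma_{l+1}$ for $\int u\sigma_{l+2}$, then the pointwise Newton--MacLaurin comparison of consecutive ratios, with the binomial factors combining to exactly $c_{n,k}$) is precisely the argument the paper compresses into ``\eqref{2.8}, \eqref{2.15} and Newton--MacLaurin''. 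So for these cases you are on the paper's route, only more explicit.

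The divergence is at the endpoint $l=n-1$. Your observation is correct: \eqref{2.8} with $m=n$ is vacuous, so $\int_M\phi'\sigma_n$ cannot be converted into a $u$-weighted integral and the HM--NM reduction genuinely breaks down there; the paper's one-line proof does not address this case either. However, your proposed repair is not a proof. If you integrate against the Newton tensor $T^{ij}=\partial\sigma_n/\partial h_{ij}$ with weight $F$, using Lemma \ref{lem2.5} you get the weighted identity $\int_M F\bigl(\phi'\sigma_{n-1}-nu\sigma_n\bigr)=-\int_M T^{ij}\nabla_iF\,\nabla_j\Phi$, and after the pointwise NM step the desired inequality reduces to showing $\int_M \sigma_n (h^{-1})^{ij}\nabla_iF\,\nabla_j\Phi\ge 0$; this term has no sign coming from convexity or from $\nabla_iu=h_i^l\nabla_l\Phi$ alone, so ``control the sign using convexity'' is a genuine gap, not a routine step. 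The statement is nevertheless true for strictly convex hypersurfaces, but it needs input beyond \eqref{2.8} and Newton--MacLaurin: for instance, spherical polarity (which exchanges $u\leftrightarrow\phi'$, sends $\lambda_i\mapsto\lambda_i^{-1}$ and $d\mu\mapsto\sigma_n\,d\mu$) turns the required inequality $c_{n,k}\int_M\phi'\sigma_n\le\int_M u\,\sigma_n\,\sigma_{k+1}/\sigma_k$ into a Heintze--Karcher-type inequality for the dual convex body, which then yields the claim after one more NM step. You should either restrict your trichotomy to $-1\le l\le n-2$ or supply such an additional argument for $l=n-1$; as written, the last paragraph of your proposal acknowledges rather than closes the gap.
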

\begin{proof}  (\ref{2.15}) and (\ref{2.16}) follow directly form Proposition \ref{prop2.8}. (\ref{2.17}) follows from (\ref{2.8}), (\ref{2.15}) and the Newton-MacLaurin inequality. \end{proof}

Let $(M, g)$ be a hypersurface in $\mathbb{S}^{n+1}$ with induced metric $g$. We now give the local
expressions of the induced metric, second fundamental form, Weingarten curvatures etc.
when $M$ is a graph of a smooth and positive function $\rho(z)$ on $\mathbb{S}^n$. Let $\partial_1, \cdots, \partial_n$ be a local
frame along $M$ and $\partial_\rho$ be the vector field along the radial direction. Then the support function,
induced metric, inverse metric matrix, second fundamental form can be expressed as follows. For simplicity, all the covariant derivatives with respect to the standard spherical metric $e_{ij}$ will also be denoted as $\nabla$ when there is no confusion in the context.
\begin{align}
 \label{2.18} u =& \frac{{\phi ^2 }}{{\sqrt {\phi ^2  + |\nabla \rho |^2 } }}, \\
 \label{2.19} g_{ij}  =& \phi ^2 e _{ij}  + \rho_{i} \rho_{j},  \\
 \label{2.20} g^{ij}  =& \frac{1}{{\phi ^2 }}(e^{ij}  - \frac{{\rho^i \rho^j }}{{\phi ^2  + |\nabla \rho |^2 }}) ,\\
 \label{2.21} h_{ij}  =& \frac{1}{{\sqrt {\phi ^2  + |\nabla \rho |^2 } }}( - \phi \nabla _i \nabla _j \rho  + 2\phi '\rho _i \rho _j  + \phi ^2 \phi 'e_{ij} ), \\
\label{2.22}  h_j^i  =& \frac{1}{{\phi ^2 \sqrt {\phi ^2  + |\nabla \rho |^2 } }}(e^{im}  - \frac{{\rho^i \rho^{m} }}{{\phi ^2  + |\nabla \rho |^2 }})( - \phi \nabla _m \nabla _j \rho  + 2\phi '\rho _m \rho _j  + \phi ^2 \phi 'e_{mj} ),
\end{align}
where all the covariant derivatives $\nabla$ and $\rho_i$ are w.r.t. the spherical metric $e_{ij}$ .

We now consider the flow equation \eqref{1.3} of radial graphs over $\mathbb{S}^n$ in $\mathbb{S}^{n+1}$. Let $$\omega  = \frac{\phi}{\sqrt {\phi^2 + |\nabla \rho |^2 }}.$$ It is known that
if a family of radial graphs satisfy $\partial _t X  = f \nu$, then the evolution of the scalar function $\rho = \rho(X(z, t), t)$ satisfies
\begin{align} \label{2.29}
\partial _t \rho  = f\omega.
\end{align}

The following is a well known commutator identity.
\begin{lemma}\label{lem2.10}
Let $h_{ij}$ be the second fundamental form and $g_{ij}$ be the induced metric of a
hypersurface in $\mathbb{S}^{n+1}$. Then
 \begin{align} \label{2.31}
 \nabla _i \nabla _j h_{ml}  = \nabla _m \nabla _l h_{ij}  &+ h_{ij} (h^2 )_{ml}  - (h^2 )_{ij} h_{ml}  + h_{il} (h^2 )_{mj}  - (h^2 )_{il} h_{mj} \notag \\
  &+ [h_{ml} g_{ij}  - h_{ij} g_{ml}  + h_{mj} g_{il}  - h_{il} g_{mj} ].
\end{align}
\end{lemma}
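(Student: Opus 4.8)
The plan is to derive the identity from the standard Ricci/Simons-type commutation formulas together with the Gauss and Codazzi equations for a hypersurface in the space form $\mathbb{S}^{n+1}$. First I would recall that, since the ambient space has constant sectional curvature $1$, the ambient curvature tensor is $\bar R_{abcd} = \bar g_{ac}\bar g_{bd} - \bar g_{ad}\bar g_{bc}$. The Codazzi equation then reads $\nabla_i h_{jl} = \nabla_j h_{il}$ (the normal component of $\bar R$ vanishes because $\bar R$ has the pure-metric form above), so $h_{ij}$ is a Codazzi tensor; and the Gauss equation gives $R_{ijml} = (g_{im}g_{jl} - g_{il}g_{jm}) + (h_{im}h_{jl} - h_{il}h_{jm})$ for the intrinsic curvature of $(M,g)$.

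The key computational step is to commute two covariant derivatives on the $(0,3)$-tensor $\nabla h$. Starting from $\nabla_i\nabla_j h_{ml}$, I would first use Codazzi to rewrite $\nabla_j h_{ml} = \nabla_m h_{jl}$, so that $\nabla_i\nabla_j h_{ml} = \nabla_i\nabla_m h_{jl}$. Then I commute the $\nabla_i$ and $\nabla_m$ derivatives using the Ricci identity for the $(0,3)$-tensor $T_{jl\cdot} := \nabla_\cdot h_{jl}$, or more cleanly apply the Ricci identity directly to $\nabla_i\nabla_m h_{jl} - \nabla_m\nabla_i h_{jl}$, which produces curvature terms $-R_{im j}{}^{p}h_{pl} - R_{iml}{}^{p}h_{jp}$. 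Substituting the Gauss equation for $R_{imjp}$ and $R_{imlp}$ and expanding, the purely metric part of $R$ contributes the bracketed terms $[h_{ml}g_{ij} - h_{ij}g_{ml} + h_{mj}g_{il} - h_{il}g_{mj}]$ (after relabelling), while the quadratic-in-$h$ part of $R$ contributes the terms $h_{ij}(h^2)_{ml} - (h^2)_{ij}h_{ml} + h_{il}(h^2)_{mj} - (h^2)_{il}h_{mj}$. Finally I apply Codazzi once more to $\nabla_m\nabla_i h_{jl}$ to turn it into $\nabla_m\nabla_l h_{ij}$, which matches the stated right-hand side; one should double-check that the residual first-derivative terms arising from commuting inside $\nabla_m(\nabla_i h_{jl} - \nabla_l h_{ij})$ vanish identically by Codazzi, so no extra terms survive.

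The main obstacle is purely bookkeeping: keeping track of index symmetries and signs through the two applications of Codazzi and the single application of the Ricci identity, and correctly matching the Gauss-equation terms to the two groups on the right-hand side. There is no conceptual difficulty — the identity is a formal consequence of Gauss, Codazzi, and Ricci — but the expansion of $(h_{im}h_{jl} - h_{il}h_{jm})$ contracted against $h$ into $(h^2)$-terms, together with the relabelling needed to symmetrize the output in the pairs $(i,j)$ and $(m,l)$, is where an error is most likely to creep in. I would therefore organize the computation by first writing $\nabla_i\nabla_j h_{ml} - \nabla_m\nabla_l h_{ij}$ as a sum of two Ricci-identity curvature contractions (one from commuting past $\nabla_i$, one from commuting past $\nabla_m$), then substitute Gauss and collect terms.
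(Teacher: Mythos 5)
Your proposal is correct: writing $\nabla_i\nabla_j h_{ml}=\nabla_i\nabla_m h_{jl}$ by Codazzi, commuting $\nabla_i$ and $\nabla_m$ with the Ricci identity, applying Codazzi once more to get $\nabla_m\nabla_l h_{ij}$, and substituting the Gauss equation $R_{imjq}=(g_{ij}g_{mq}-g_{iq}g_{mj})+(h_{ij}h_{mq}-h_{iq}h_{mj})$ produces exactly the bracketed metric terms and the $(h^2)$-terms of \eqref{2.31}, with the residual first-derivative terms vanishing since $\nabla h$ is totally symmetric. The paper states Lemma \ref{lem2.10} without proof as a well-known identity, and your derivation is precisely the standard argument it implicitly invokes, so there is nothing to fault beyond fixing a consistent sign convention for the Riemann tensor.
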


\begin{lemma} \label{lem2.11}
Along the flow \eqref{1.3} in $\mathbb{S}^{n+1}$, the graph function $\rho$ and the support function $u =\langle \phi(\rho) \frac{\partial }{\partial \rho}, \nu\rangle$ evolve as follows
\begin{equation}\label{EV-rho} \partial _t \rho - uF^{ij} \nabla_i\nabla_j\rho  =\frac{\phi'}{\phi}u (c_{n,k}-F^{ij}g_{ij})+\frac{\phi'}{\phi}uF^{ij}\rho_{i}\rho_{j} \end{equation}
\begin{align} \label{2.32}
 \partial _t u - uF^{ij} \nabla_i\nabla_j u  =  - c_{n,k} \nabla \Phi \nabla \phi ' + F\nabla \Phi \nabla u + (c_{n,k} \phi ' - 2uF)\phi ' + u^2 F^{ij} (h^2 )_{ij},
 \end{align}
where $F=\frac{{\sigma _{k + 1} (\lambda )}}{{\sigma _k (\lambda )}}$ and $F^{ij}= \frac{\partial F}{\partial h_{ij}}$.
\end{lemma}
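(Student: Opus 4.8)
The plan is to treat the two identities in parallel: each follows by substituting the first-order evolution of the relevant quantity into its geometric description on a radial graph and then using the degree-one homogeneity of $F$ together with Lemmas \ref{lem2.5} and \ref{lem2.7}. The structural point to keep in mind is that all terms carrying third derivatives of $\rho$ (equivalently, $\nabla h$) must cancel, so that \eqref{2.32} is genuinely a second-order parabolic equation for $u$. Throughout I will use that $F=\sigma_{k+1}/\sigma_k$ is homogeneous of degree one, so that Euler's relation gives $F^{ij}h_{ij}=F$, and that $\nabla_mF=F^{ij}\nabla_mh_{ij}$ by the chain rule.

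For \eqref{EV-rho}: by \eqref{2.29} the graph function satisfies $\partial_t\rho=f\omega$ with $f=c_{n,k}\phi'-uF$, and since $u=\langle\phi\,\partial_\rho,\nu\rangle=\phi\omega$ by \eqref{2.18}, this reads $\partial_t\rho=\frac{u}{\phi}(c_{n,k}\phi'-uF)$. I would then rewrite $uF=uF^{ij}h_{ij}$ by Euler's relation and replace $uh_{ij}$ using Lemma \ref{lem2.5}, i.e. $uh_{ij}=\phi'g_{ij}-\nabla_i\nabla_j\Phi$. Since $\Phi=\Phi(\rho)$ with $\Phi'=\phi$, one has $\nabla_i\nabla_j\Phi=\phi'\rho_i\rho_j+\phi\,\nabla_i\nabla_j\rho$ (Hessians with respect to $g$), whence $uF=\phi'F^{ij}g_{ij}-\phi'F^{ij}\rho_i\rho_j-\phi\,F^{ij}\nabla_i\nabla_j\rho$. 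Plugging this into $\partial_t\rho=\frac{u}{\phi}(c_{n,k}\phi'-uF)$ and regrouping produces \eqref{EV-rho} directly. (Alternatively one could substitute the graph expressions \eqref{2.19} and \eqref{2.21} for $g_{ij}$ and $h_{ij}$ and simplify, but routing through $\Phi$ is cleaner.)

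For \eqref{2.32}: set $V=\phi\,\partial_\rho$, which is the ambient gradient of $\Phi$ and hence a conformal field with $\bar\nabla V=\phi'\,\bar g$ (the unrestricted form of Lemma \ref{lem2.5}). Together with the standard variation formula $\partial_t\nu=-\nabla f$ of the outward normal under $\partial_tX=f\nu$, this gives $\partial_tu=\partial_t\langle V,\nu\rangle=f\phi'-\langle\nabla\Phi,\nabla f\rangle$, where $\nabla\Phi=V^{\top}$ is the tangential gradient of $\Phi$ along $M(t)$. Expanding $\nabla f=c_{n,k}\nabla\phi'-F\nabla u-u\nabla F$ yields the terms $-c_{n,k}\nabla\Phi\nabla\phi'$, $F\nabla\Phi\nabla u$ and $u\langle\nabla\Phi,\nabla F\rangle$, plus $f\phi'$. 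For the diffusion term I would invoke Lemma \ref{lem2.7}: contracting $\nabla_i\nabla_ju$ with $F^{ij}$ and using $F^{ij}h_{ij}=F$ gives $uF^{ij}\nabla_i\nabla_ju=u\langle\nabla\Phi,\nabla F\rangle+u\phi'F-u^2F^{ij}(h^2)_{ij}$, where $F^{ij}g^{ml}\nabla_mh_{ij}\nabla_l\Phi=\langle\nabla\Phi,\nabla F\rangle$ by the chain rule for $\nabla F$. Moving $uF^{ij}\nabla_i\nabla_ju$ to the left-hand side, the $u\langle\nabla\Phi,\nabla F\rangle$ terms cancel, $f\phi'-u\phi'F=(c_{n,k}\phi'-2uF)\phi'$, and what remains is exactly \eqref{2.32}.

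I expect the only real obstacle to be bookkeeping: keeping straight whether $\nabla$ and the raising of indices refer to the round metric on $\mathbb{S}^n$ or to the induced metric $g$ on $M(t)$ — the computation above is cleanest in the $g$-convention, consistent with Lemmas \ref{lem2.5} and \ref{lem2.7} — and correctly extracting the tangential part $V^{\top}=\nabla\Phi$ of the conformal field $V$. The conceptual heart is the cancellation of the $\langle\nabla\Phi,\nabla F\rangle$ terms in the $u$-equation, which is precisely what makes \eqref{2.32} parabolic; beyond that, everything is a finite-order computation driven by the degree-one homogeneity of $F$.
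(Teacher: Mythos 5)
Your proposal is correct and follows essentially the same route as the paper: for \eqref{EV-rho} it combines \eqref{2.29} with the Hessian identity coming from Lemma \ref{lem2.5} (the paper simply states the resulting formula for $uF^{ij}\rho_{ij}$, which is exactly what you derive via $uh_{ij}=\phi'g_{ij}-\nabla_i\nabla_j\Phi$ and $F^{ij}h_{ij}=F$), and for \eqref{2.32} it uses $\partial_t u=f\phi'-\nabla\Phi\nabla f$ together with Lemma \ref{lem2.7}, the chain rule $\nabla F=F^{ij}\nabla h_{ij}$ and Euler's relation, with the same cancellation of the $u\,\nabla\Phi\nabla F$ terms. No gaps.
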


\begin{proof} The function $\rho$ satisfies
\[\partial_t\rho=(c_{n,k}\phi'-uF)\omega,\]
and
\[uF^{ij}\rho_{ij}=-u\omega F+\frac{\phi'}{\phi}uF^{ij}g_{ij}-\frac{\phi'}{\phi}uF^{ij}\rho_{i}\rho_{j}.\]
Hence (\ref{EV-rho}) holds.

Applying Lemma \ref{lem2.7}, we can obtain
\begin{eqnarray*} \label{2.33}
\partial _t u - uF^{ij} u_{ij}  &=& f\phi ' - \nabla \Phi \nabla f - uF^{ij} [\nabla h_{ij} \nabla \Phi  + \phi 'h_{ij}  - (h^2 )_{ij} u] \notag \\
&=& (c_{n,k} \phi ' - uF)\phi ' - \nabla \Phi \nabla (c_{n,k} \phi ' - uF)\notag  \\
&& - u[\nabla F\nabla \Phi  + \phi 'F - F^{ij} (h^2 )_{ij} u] \notag \\
&=&  - c_{n,k} \nabla \Phi \nabla \phi ' + F\nabla \Phi \nabla u + (c_{n,k} \phi ' - 2uF)\phi ' + u^2 F^{ij} (h^2 )_{ij}.
\end{eqnarray*}

\end{proof}

\begin{lemma} \label{lem2.12}
Let $h_{ij}$ be the second fundamental form and $g_{ij}$ be the induced metric of a
hypersurface in $\mathbb{S}^{n+1}$ and $F=\frac{{\sigma _{k + 1} (\lambda )}}{{\sigma _k (\lambda )}}$. Then 
\begin{align} \label{2.34}
 \nabla _i \nabla _j F =& F^{\alpha \beta } \nabla_\alpha  \nabla _\beta  h_{ij}  + F^{\alpha \beta ,\gamma \eta }\nabla _i h_{\alpha\beta }  \nabla _j h_{\gamma\eta }  \notag   \\
  &+ [F^{\alpha \beta }  (h^2 )_{\alpha  \beta } - F^{\alpha \alpha }  ] h_{ij}- F [ (h^2 )_{ij} - g_{ij} ],
\end{align}
where $F^{\alpha \beta}= \frac{\partial F}{\partial h_{\alpha \beta}}$ and $F^{\alpha \beta, \gamma\eta}= \frac{\partial^2 F}{\partial h_{\alpha \beta}\partial h_{\gamma\eta}}$.
\end{lemma}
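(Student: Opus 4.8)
The plan is to establish \eqref{2.34} by a direct computation, differentiating the composite function $F(h_{ij})$ twice with respect to the covariant derivative on $M$, and then absorbing the curvature terms that arise from commuting covariant derivatives using the Codazzi-type identity in Lemma~\ref{lem2.10}. Throughout, I will work at a fixed point $x_0\in M$ in an orthonormal frame that diagonalizes $h_{ij}(x_0)$, which is permissible since both sides of \eqref{2.34} are tensorial.

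First I would record the first derivative. Since $F$ is a smooth symmetric function of the Weingarten map, the chain rule gives $\nabla_i F = F^{\alpha\beta}\nabla_i h_{\alpha\beta}$, where $F^{\alpha\beta}=\partial F/\partial h_{\alpha\beta}$. Differentiating once more and using the chain rule again,
\begin{align*}
\nabla_j\nabla_i F = F^{\alpha\beta,\gamma\eta}\nabla_j h_{\gamma\eta}\,\nabla_i h_{\alpha\beta} + F^{\alpha\beta}\nabla_j\nabla_i h_{\alpha\beta}.
\end{align*}
The first term is already the Hessian-of-$F$ term appearing in \eqref{2.34} (after relabeling, and using the symmetry $\nabla_i h_{\alpha\beta}=\nabla_i h_{\beta\alpha}$). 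The issue is that $\nabla_j\nabla_i h_{\alpha\beta}$ is not symmetric in the pair $\{(j,i),(\alpha,\beta)\}$, whereas we want $\nabla_i\nabla_j h_{\alpha\beta}$ contracted in the shape $F^{\alpha\beta}\nabla_\alpha\nabla_\beta h_{ij}$. This is exactly what Lemma~\ref{lem2.10} is for: apply \eqref{2.31} with the index roles $(i,j,m,l)\rightsquigarrow(\alpha,\beta,i,j)$ — being careful that \eqref{2.31} as stated relates $\nabla_i\nabla_j h_{ml}$ to $\nabla_m\nabla_l h_{ij}$, so one first commutes $\nabla_j\nabla_i h_{\alpha\beta}$ into $\nabla_i\nabla_j h_{\alpha\beta}$ via the Gauss equation (the ambient $\mathbb{S}^{n+1}$ contributes the constant-curvature correction, and the Gauss equation converts intrinsic curvature into the $h\wedge h$ and $g\wedge g$ terms), and then swaps the two derivative pairs.

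The main bookkeeping step — and the place where a sign error is easiest to make — is collecting the contraction of $F^{\alpha\beta}$ against the quadratic-in-$h$ terms on the right of \eqref{2.31}. Using the frame that diagonalizes $h$ at $x_0$, $F^{\alpha\beta}$ is also diagonal there, so $F^{\alpha\beta}h_{ij}(h^2)_{\alpha\beta}$-type contractions collapse to $\sum_\alpha F^{\alpha\alpha}\lambda_\alpha^2$ times $h_{ij}$, i.e. $F^{\alpha\beta}(h^2)_{\alpha\beta}h_{ij}$; the terms of the form $F^{\alpha\beta}(h^2)_{\alpha\beta}h_{ij}$ and $F^{\alpha\beta}h_{\alpha\beta}(h^2)_{ij}$ must be distinguished, and one uses that $F^{\alpha\beta}h_{\alpha\beta}=F$ (Euler's identity, since $F=\sigma_{k+1}/\sigma_k$ is homogeneous of degree one), together with $F^{\alpha\beta}g_{\alpha\beta}=F^{\alpha\alpha}$ in the orthonormal frame. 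The cross terms $h_{il}(h^2)_{mj}-(h^2)_{il}h_{mj}$ in \eqref{2.31}, once contracted with the diagonal $F^{\alpha\beta}$, also reduce to combinations of $(h^2)_{ij}$ and $h_{ij}$ with coefficients built from $F^{\alpha\alpha}$ and $F^{\alpha\alpha}\lambda_\alpha^2$; assembling these yields precisely the term $[F^{\alpha\beta}(h^2)_{\alpha\beta}-F^{\alpha\alpha}]h_{ij}$. Finally, the pure metric terms $h_{ml}g_{ij}-h_{ij}g_{ml}+h_{mj}g_{il}-h_{il}g_{mj}$ contracted with $F^{\alpha\beta}$ (with the appropriate index substitution) produce $F[g_{ij}-(h^2)_{ij}]$ up to the already-counted $h_{ij}$-terms; combining with the Gauss-equation correction from the first commutation gives the stated $-F[(h^2)_{ij}-g_{ij}]$. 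I expect the only real obstacle is disciplined index tracking through the two successive commutations, so in the writeup I would state the two commutator steps as displayed equations and then list the contracted pieces one line at a time.
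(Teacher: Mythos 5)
Your proposal is correct and follows essentially the same route as the paper: apply the chain rule twice to get $\nabla_i\nabla_j F = F^{\alpha\beta}\nabla_i\nabla_j h_{\alpha\beta}+F^{\alpha\beta,\gamma\eta}\nabla_i h_{\alpha\beta}\nabla_j h_{\gamma\eta}$, swap the derivative pair with the tensor pair via Lemma~\ref{lem2.10}, and contract using $F^{\alpha\beta}h_{\alpha\beta}=F$, $F^{\alpha\beta}g_{\alpha\beta}=\sum_\alpha F^{\alpha\alpha}$ and simultaneous diagonalization, under which the cross terms cancel outright. The only cosmetic difference is your preliminary commutation of $\nabla_j\nabla_i h_{\alpha\beta}$ into $\nabla_i\nabla_j h_{\alpha\beta}$, which the paper avoids by differentiating in the stated order (and which in any case contributes nothing after contraction with $F^{\alpha\beta}$, since $F^{\alpha\beta}$ commutes with $h$).
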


\begin{proof}

\begin{align} \label{2.35}
 \nabla _i \nabla _j F =& F^{\alpha \beta } \nabla _i \nabla _j h_{\alpha\beta }   + F^{\alpha \beta ,\gamma \eta } \nabla _i h_{\alpha\beta }  \nabla _j h_{\gamma\eta }  \notag  \\
  =& F^{\alpha \beta } \nabla_\alpha  \nabla _\beta  h_{ij}  + F^{\alpha \beta ,\gamma \eta }\nabla _i h_{\alpha\beta }  \nabla _j h_{\gamma\eta }  \notag   \\
  &+ F^{\alpha \beta }  [h_{ij} (h^2 )_{\alpha  \beta }  - (h^2 )_{ij} h_{\alpha  \beta }  + h_{i\beta } (h^2 )_{\alpha j}  - (h^2 )_{i\beta } h_{\alpha  j}   \notag \\
  &\qquad \quad + (h_{\alpha  \beta } g_{ij}  - h_{ij} g_{\alpha \beta }  + h_{i\beta } g_{\alpha j}  - h_{\alpha j} g_{i\beta } )]  \notag \\
  =& F^{\alpha \beta } \nabla_\alpha  \nabla _\beta  h_{ij}  + F^{\alpha \beta ,\gamma \eta }\nabla _i h_{\alpha\beta }  \nabla _j h_{\gamma\eta }  \notag   \\
  &+ [F^{\alpha \beta }  (h^2 )_{\alpha  \beta } - F^{\alpha \alpha }  ] h_{ij}- F [ (h^2 )_{ij} - g_{ij} ].
\end{align}

\end{proof}

\begin{lemma} \label{lem2.13}
Let $h_{ij}$ be the second fundamental form and $g_{ij}$ be the induced metric of a
hypersurface in $\mathbb{S}^{n+1}$ and $F=\frac{{\sigma _{k + 1} (\lambda )}}{{\sigma _k (\lambda )}}$. Then along the flow \eqref{1.3}
\begin{align} \label{2.36}
  \partial _t h_j^i  -   u F^{ml} \nabla _m \nabla _l h_{j}^i=& u F^{ml,pq} \nabla^i h_{ml} \nabla _j h_{pq}  + \nabla^i u\nabla _j F + \nabla _j u\nabla^i F + F\nabla h_{j}^i \nabla \Phi  \notag \\
  &- (c_{n,k} \phi ' + uF)(h^2 )_{j}^i  \notag \\
  &+ h_{j}^i [u(F^{ml} (h^2 )_{ml}  - F^{mm} ) + \phi 'F - c_{n,k} u] \notag \\
  &+ 2uF\delta _j^i,
\end{align}
and
\begin{align} \label{2.37}
 \partial _t F - uF^{ml} \nabla _m \nabla _l F =& 2F^{ml} \nabla _m u\nabla _l F + F\nabla \Phi \nabla F  \notag\\
  &- [c_{n,k} F^{ml} (h^2 )_{ml}  - F^2 ]\phi ' + uF[\sum {F^{mm} }  - c_{n,k} ].
\end{align}
\end{lemma}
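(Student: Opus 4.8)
The plan is to derive both identities directly from the variational formulas of Proposition~\ref{prop2.8}, specialized to the flow speed $f=c_{n,k}\phi'-uF$, together with the Hessian identities of Lemmas~\ref{lem2.5}, \ref{lem2.7} and \ref{lem2.12}. The one preliminary computation not already recorded is the Hessian of $\phi'$ along $M(t)$: since $\phi=\sin\rho$ obeys $\phi''=-\phi$ while $\Phi'=\phi$, the chain rule on $M(t)$ gives $\nabla_i\phi'=\phi''\nabla_i\rho=-\phi\nabla_i\rho=-\nabla_i\Phi$, so that by Lemma~\ref{lem2.5}, $\nabla_i\nabla_j\phi'=-\nabla_i\nabla_j\Phi=-\phi'g_{ij}+uh_{ij}$, i.e. $\nabla^i\nabla_j\phi'=-\phi'\delta_j^i+uh_j^i$.

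For \eqref{2.36} I would start from \eqref{EV-W}, $\partial_t h_j^i=-\nabla^i\nabla_j f-f(h^2)_j^i-f\delta_j^i$, insert $f=c_{n,k}\phi'-uF$, and expand $\nabla^i\nabla_j f$ using the Leibniz rule $\nabla^i\nabla_j(uF)=F\nabla^i\nabla_j u+u\nabla^i\nabla_j F+\nabla^i u\,\nabla_j F+\nabla_j u\,\nabla^i F$. Substituting the Hessian of $\phi'$ above, the index-raised Hessian of $u$ from Lemma~\ref{lem2.7}, and the index-raised Hessian of $F$ from Lemma~\ref{lem2.12}, I would then move the second-order term $uF^{\alpha\beta}\nabla_\alpha\nabla_\beta h_j^i$ to the left and collect the remainder by tensorial type: the $F^{ml,pq}$-term quadratic in $\nabla h$; the three gradient terms $\nabla^i u\,\nabla_j F+\nabla_j u\,\nabla^i F+F\,\nabla h_j^i\nabla\Phi$; the $(h^2)_j^i$-terms, whose coefficients add to $-uF-uF-(c_{n,k}\phi'-uF)=-(c_{n,k}\phi'+uF)$; the $h_j^i$-terms, with coefficient $-c_{n,k}u+\phi'F+u(F^{ml}(h^2)_{ml}-F^{mm})$; and the $\delta_j^i$-terms, whose coefficients add to $c_{n,k}\phi'+uF-(c_{n,k}\phi'-uF)=2uF$. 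This reproduces \eqref{2.36}.

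For \eqref{2.37} I would use that $F=F(g^{-1}h)$ also depends on the metric, so along the flow $\partial_t F=F^{\alpha\beta}\partial_t h_{\alpha\beta}-2fF^{\alpha\beta}(h^2)_{\alpha\beta}$, the last term being the contribution of $\partial_t g_{\alpha\beta}=2fh_{\alpha\beta}$ through the inverse metric; with $\partial_t h_{\alpha\beta}$ from Proposition~\ref{prop2.8} this becomes $\partial_t F=-F^{\alpha\beta}\nabla_\alpha\nabla_\beta f-fF^{\alpha\beta}(h^2)_{\alpha\beta}-f\sum_m F^{mm}$. I would expand $\nabla_\alpha\nabla_\beta f$ exactly as in the previous step; contracting Lemma~\ref{lem2.7} with $F^{\alpha\beta}$ and using $F^{\alpha\beta}\nabla_m h_{\alpha\beta}=\nabla_m F$ together with the Euler identity $F^{\alpha\beta}h_{\alpha\beta}=F$ (degree-one homogeneity of $\sigma_{k+1}/\sigma_k$) converts $F\,F^{\alpha\beta}\nabla_\alpha\nabla_\beta u$ into $F\,\nabla\Phi\nabla F+F^2\phi'-uF\,F^{\alpha\beta}(h^2)_{\alpha\beta}$, while $uF^{\alpha\beta}\nabla_\alpha\nabla_\beta F$ is kept and moved to the left. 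Collecting terms, with the $\pm c_{n,k}\phi'\sum_m F^{mm}$ contributions cancelling and the $\pm uF\,F^{\alpha\beta}(h^2)_{\alpha\beta}$ contributions cancelling, gives \eqref{2.37}.

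The whole computation is purely algebraic once Proposition~\ref{prop2.8} and the Hessian formulas are in hand; there is no analytic difficulty. I expect the only real hazard to be consistent index-raising in Lemmas~\ref{lem2.7} and \ref{lem2.12} and keeping the grouping by tensorial type honest. The one conceptual point worth stating explicitly is the metric-variation term $-2fF^{\alpha\beta}(h^2)_{\alpha\beta}$ in $\partial_t F$; alternatively one may contract \eqref{2.36} with $\partial F/\partial h_j^i$, but then one must convert between the second derivatives $F^{\alpha\beta,\gamma\eta}=\partial^2F/\partial h_{\alpha\beta}\partial h_{\gamma\eta}$ and $\partial^2F/\partial h_j^i\partial h_l^k$, which is less transparent.
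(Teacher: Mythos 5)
Your derivation of \eqref{2.36} is correct and is exactly the paper's argument: start from \eqref{EV-W} with $f=c_{n,k}\phi'-uF$, use $\nabla\phi'=-\nabla\Phi$ together with Lemma~\ref{lem2.5}, the Hessians of $u$ and $F$ from Lemmas~\ref{lem2.7} and \ref{lem2.12}, and collect by tensorial type; all three coefficient sums you record ($-(c_{n,k}\phi'+uF)$, the $h^i_j$-bracket, and $2uF$) check out. For \eqref{2.37} you take a mild variant: the paper simply contracts \eqref{2.36} via $\partial_t F=F^i_j\,\partial_t h^j_i$ (the mixed Weingarten tensor already carries the metric variation, and the $F^{ml,pq}$-terms cancel against the chain-rule correction in $F^{ml}\nabla_m\nabla_l F$), whereas you return to Proposition~\ref{prop2.8} for $\partial_t h_{\alpha\beta}$, add the $-2fF^{\alpha\beta}(h^2)_{\alpha\beta}$ metric-variation term explicitly, and contract with $F^{\alpha\beta}$, which avoids Lemma~\ref{lem2.12} altogether for this part; both routes are equivalent and your stated cancellations (of $\pm c_{n,k}\phi'\sum_mF^{mm}$ and $\pm uF\,F^{\alpha\beta}(h^2)_{\alpha\beta}$, using the homogeneity identity $F^{\alpha\beta}h_{\alpha\beta}=F$) are exactly what is needed, so the proposal is complete.
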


\begin{proof}
By the tensorial property, we do not distinguish upper and lower indexes in this
proof whenever applicable. We need the fact that $\nabla \phi '=-\nabla \Phi$, and then we can obtain
\begin{align}
\partial _t h_j^i  =&  - \nabla _i \nabla _j (c_{n,k} \phi ' - uF) - (c_{n,k} \phi ' - uF)(h^2 )_{ij}  - (c_{n,k} \phi ' - uF)\delta _j^i  \notag \\
=&  - c_{n,k} \nabla _i \nabla _j \phi ' + u\nabla _i \nabla _j F + \nabla _i u\nabla _j F + \nabla _j u\nabla _i F + F\nabla _i \nabla _j u \notag \\
&- (c_{n,k} \phi ' - uF)(h^2 )_{ij}  - (c_{n,k} \phi ' - uF)\delta _j^i  \notag \\
=& c_{n,k} (\phi 'g_{ij}  - uh_{ij} ) + \nabla _i u\nabla _j F + \nabla _j u\nabla _i F \notag \\
&+ F[\nabla h_{ij} \nabla \Phi  + \phi 'h_{ij}  - u(h^2 )_{ij} ] \notag \\
&+ u[F^{ml} \nabla _m \nabla _l h_{ij}  + F^{ml,pq} \nabla _i h_{ml} \nabla _j h_{pq}  + (F^{ml} (h^2 )_{ml}  - F^{mm} )h_{ij}  - F((h^2 )_{ij}  - g_{ij} )] \notag \\
&- (c_{n,k} \phi ' - uF)(h^2 )_{ij}  - (c_{n,k} \phi ' - uF)\delta _j^i \notag  \\
=& u[F^{ml} \nabla _m \nabla _l h_{ij}  + F^{ml,pq} \nabla _i h_{ml} \nabla _j h_{pq} ] + \nabla _i u\nabla _j F + \nabla _j u\nabla _i F + F\nabla h_{ij} \nabla \Phi \notag  \\
&- (c_{n,k} \phi ' + uF)(h^2 )_{ij}  \notag \\
&+ h_{ij} [u(F^{ml} (h^2 )_{ml}  - F^{mm} ) + \phi 'F - c_{n,k} u] \notag \\
&+ 2uF\delta _j^i.  \notag
\end{align}

Finally, \eqref{2.37} follows from
$$\partial_t F = F^i_j \partial_t h^j_i.$$
\end{proof}

\begin{lemma}  \label{lem2.14}
Let $h_{ij}$ be the second fundamental form and $g_{ij}$ be the induced metric of a
hypersurface in $\mathbb{S}^{n+1}$ and $F=\frac{{\sigma _{k + 1} (\lambda)}}{{\sigma _k (\lambda )}}$. Then along the flow \eqref{1.3}
\begin{align} \label{2.38}
 \partial _t (uF) - uF^{ml} \nabla _m \nabla _l (uF) =& F[c_{n,k} |\nabla \Phi |^2  + (c_{n,k} \phi ' - 2uF)\phi ' + u^2 F^{ml} (h^2 )_{ml} ] \notag \\
&+ u[uF(\sum {F^{mm} }  - c_{n,k} ) - (c_{n,k} F^{ml} (h^2 )_{ml}  - F^2 )\phi ' ]\notag \\
& + F\nabla \Phi \nabla (uF) ,
  \end{align}
and
\begin{align} \label{2.39}
 \partial _t (\frac{{h_i^i }}{u}) - uF^{ml} \nabla _m \nabla _l (\frac{{h_i^i }}{u}) =& F^{ml,pq} \nabla ^i h_{ml} \nabla _i h_{pq}  + \frac{2}{u}\nabla ^i u\nabla _i F + F\nabla \Phi \nabla (\frac{{h_i^i }}{u})\notag\\
  &+ 2F^{ml} \nabla _m u\nabla _l (\frac{{h_i^i }}{u})- (c_{n,k} \frac{{\phi '}}{u} + F)(h^2 )_i^i  + 2F \notag \\
  &+ h_i^i [F^{mm}  - (c_{n,k} \frac{{\phi '}}{{u^2 }} - 3\frac{F}{u})\phi ' - c_{n,k}  + \frac{{c_{n,k} }}{{u^2 }}\nabla \Phi \nabla \phi '].
  \end{align}
\end{lemma}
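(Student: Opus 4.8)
The plan is to deduce both identities by purely algebraic manipulation from the evolution equations already recorded above --- \eqref{2.32} for $u$, \eqref{2.37} for $F$, and \eqref{2.36} for the Weingarten operator $h^i_j$ --- together with the product rule (for $uF$), the quotient rule (for $h^i_i/u$), and the elementary relation $\nabla\phi'=-\nabla\Phi$, which in particular gives $\nabla\Phi\nabla\phi'=-|\nabla\Phi|^2$. No further geometric input is needed.

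For \eqref{2.38} I would set $w=uF$ and use $\partial_t w=F\,\partial_t u+u\,\partial_t F$ together with
\[
\nabla_m\nabla_l w = F\,\nabla_m\nabla_l u + u\,\nabla_m\nabla_l F + \nabla_m u\,\nabla_l F + \nabla_l u\,\nabla_m F ,
\]
so that
\begin{align*}
\partial_t w - uF^{ml}\nabla_m\nabla_l w ={}& F\big[\partial_t u - uF^{ml}\nabla_m\nabla_l u\big] + u\big[\partial_t F - uF^{ml}\nabla_m\nabla_l F\big] \\
& - 2uF^{ml}\nabla_m u\,\nabla_l F .
\end{align*}
Substituting \eqref{2.32} and \eqref{2.37}, the last term cancels the $2uF^{ml}\nabla_m u\,\nabla_l F$ produced by \eqref{2.37}; the remaining first-order terms combine through $F^2\nabla\Phi\nabla u + uF\nabla\Phi\nabla F = F\nabla\Phi\nabla(uF)$, and $-c_{n,k}F\nabla\Phi\nabla\phi'$ becomes $c_{n,k}F|\nabla\Phi|^2$. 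Collecting the zeroth-order terms then gives \eqref{2.38}.

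For \eqref{2.39} I would first take the trace $j=i$ in \eqref{2.36} to get the parabolic equation for $H:=h^i_i$ (using $\delta^i_i=n$ and $\nabla^i u\,\nabla_j F+\nabla_j u\,\nabla^i F\to 2\nabla^i u\,\nabla_i F$ after tracing), then set $v=H/u$ and use
\begin{align*}
\partial_t v - uF^{ml}\nabla_m\nabla_l v ={}& \frac1u\big[\partial_t H - uF^{ml}\nabla_m\nabla_l H\big] - \frac{H}{u^2}\big[\partial_t u - uF^{ml}\nabla_m\nabla_l u\big] \\
& + \frac2u F^{ml}\nabla_m u\,\nabla_l H - \frac{2H}{u^2}F^{ml}\nabla_m u\,\nabla_l u .
\end{align*}
Plugging in the traced \eqref{2.36} and \eqref{2.32}, the key observation is that the two appearances of $F^{ml}(h^2)_{ml}$ --- with coefficient $H$ from the traced \eqref{2.36} and with coefficient $-H$ from \eqref{2.32} after the division by $u^2$ --- cancel, leaving only $-(c_{n,k}\phi'/u+F)(h^2)^i_i$; the remaining first-order terms reorganise as $F\nabla\Phi\nabla v + 2F^{ml}\nabla_m u\,\nabla_l v + \frac2u\nabla^i u\,\nabla_i F$, and the rest assembles into the $h^i_i$-linear coefficient and the constant term displayed in \eqref{2.39}.

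I expect no conceptual difficulty here. The main --- and essentially only --- obstacle is the bookkeeping in \eqref{2.39}: one has to keep careful track of the several zeroth- and first-order terms built from $\phi'$, $(\phi')^2$ and $\nabla\Phi\nabla\phi'$ that constitute the coefficient of $h^i_i$, and arrange the substitution so that the cancellations (the $F^{ml}\nabla u\,\nabla F$ terms in \eqref{2.38}, the $F^{ml}(h^2)_{ml}$ terms in \eqref{2.39}) and the quotient-rule gradient terms come out in precisely the stated form.
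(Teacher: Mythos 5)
Your strategy is the intended one (the paper offers no written proof; the lemma is exactly the product/quotient-rule combination of \eqref{2.32}, \eqref{2.36} and \eqref{2.37}), and your treatment of \eqref{2.38} is complete and correct: the cross terms $-2uF^{ml}\nabla_m u\nabla_l F$ cancel against the corresponding term in \eqref{2.37}, $F^{2}\nabla\Phi\nabla u+uF\nabla\Phi\nabla F=F\nabla\Phi\nabla(uF)$, and $-c_{n,k}F\nabla\Phi\nabla\phi'=c_{n,k}F|\nabla\Phi|^{2}$, which reproduces \eqref{2.38} exactly.

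For \eqref{2.39} there are two concrete problems with your outline. First, the quantity in \eqref{2.39} is a fixed diagonal entry $h^i_i$ (no summation), not the trace: if you really sum and use $\delta^i_i=n$ as you propose, the term $\tfrac1u\cdot 2uF\delta^i_i$ becomes $2nF$, whereas the stated identity has $+2F$; so setting $j=i$ in \eqref{2.36} must be done for a single index. Second, you assert without computation that ``the rest assembles into the $h^i_i$-linear coefficient displayed in \eqref{2.39}''; it does not quite. Carrying out the bookkeeping, the only sources of the trace $\sum_m F^{mm}$ are the bracket in \eqref{2.36}, which contributes $-h^i_i\sum_m F^{mm}$ after division by $u$ (the $F^{ml}(h^2)_{ml}$ pieces cancel exactly as you say), so the coefficient of $h^i_i$ comes out as $-\sum_m F^{mm}-\bigl(c_{n,k}\tfrac{\phi'}{u^{2}}-3\tfrac{F}{u}\bigr)\phi'-c_{n,k}+\tfrac{c_{n,k}}{u^{2}}\nabla\Phi\nabla\phi'$, i.e.\ with the opposite sign on $F^{mm}$ from the displayed formula. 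A quick consistency check on a static geodesic sphere (where $\nabla\Phi=0$, $u=\phi$, $c_{n,k}\phi'=uF$, $\lambda_i=\phi'/\phi$ and $\sum_m F^{mm}=c_{n,k}$) confirms that the right-hand side vanishes with the minus sign and not with the plus sign, so the displayed bracket in \eqref{2.39} carries a sign typo that your computation, if done explicitly, should detect and correct rather than ``assemble into.'' With these two points fixed, your argument goes through and is the natural proof of the lemma.
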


\section{A priori estimates}

Since $M_0$ is strictly convex, there is $T>0$ such that flow (\ref{1.3}) exists and  the solution $M(t)$ is strictly convex for all $0\le t< T$. {\it This will be assumed in the rest of this section.}

\subsection{$C^0$ estimate}

\begin{theorem} \label{th4.1}
Let $M_0$ be a strictly convex, radial graph of positive function $\rho_0$ over $\mathbb{S}^n$ embedded in  $\mathbb{S}^{n+1}$.
If $M(t)$ solves the flow \eqref{1.3} with the initial value $M_0$, then for any $(z,t) \in \mathbb{S}^n \times [0, T)$
\begin{align}\label{4.1}
\mathop {\min }\limits_{z \in \mathbb{S}^n} \rho_0(z) \leq \rho(z,t) \leq \mathop {\max }\limits_{z \in \mathbb{S}^n} \rho_0(z).
\end{align}
\end{theorem}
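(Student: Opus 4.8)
The plan is to prove the $C^0$ bound via the maximum principle applied to the scalar evolution equation \eqref{EV-rho} for the graph function $\rho$. Since $M(t)$ is assumed strictly convex on $[0,T)$, the operator $F^{ij}\nabla_i\nabla_j$ appearing in \eqref{EV-rho} is uniformly elliptic on each time slice (the matrix $(F^{ij})$ is positive definite on the positive cone), so the parabolic maximum principle is available. The key structural feature to exploit is that the zero-order forcing term on the right-hand side of \eqref{EV-rho} vanishes to the correct order at a spatial extremum of $\rho$.

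First I would establish the upper bound. Fix $t_0 \in (0,T)$ and let $z_0 \in \mathbb S^n$ be a point where $\rho(\cdot,t_0)$ attains its spatial maximum. At such a point $\nabla\rho = 0$, so the term $\frac{\phi'}{\phi}uF^{ij}\rho_i\rho_j$ drops out, and $(\nabla_i\nabla_j\rho)$ is negative semidefinite so $F^{ij}\nabla_i\nabla_j\rho \le 0$. Moreover at $\nabla\rho=0$ one has $\omega = 1$ and $u = \phi^2/\sqrt{\phi^2+|\nabla\rho|^2} = \phi = \sin\rho > 0$, and the induced metric is $g_{ij} = \phi^2 e_{ij}$, so $F^{ij}g_{ij} = \phi^{-2}F^{ii}$ with the trace taken in the spherical metric; I would need the algebraic fact that for the strictly convex graph $c_{n,k} - F^{ij}g_{ij} \le 0$ — equivalently $\sum_i \partial F/\partial\lambda_i \ge c_{n,k}$ when $F = \sigma_{k+1}/\sigma_k$ evaluated on $\lambda \in \Gamma_{k+1}$, which follows from the Newton–MacLaurin inequality (Proposition \ref{prop2.4}) since $\sum_i \partial(\sigma_{k+1}/\sigma_k)/\partial\lambda_i$ is a symmetric concave $1$-homogeneous function comparing unfavourably-sized ratios; at $\lambda = I$ it equals exactly $c_{n,k}$ and it is monotone along the cone. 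Granting this, the right-hand side of \eqref{EV-rho} is $\le 0$ at the spatial maximum, while the left-hand side reads $\partial_t\rho - (\text{nonpositive})$, so $\partial_t \max_z\rho \le 0$ in the barrier/viscosity sense, and $\max_z\rho(\cdot,t)$ is nonincreasing. The lower bound is symmetric: at a spatial minimum $\nabla\rho = 0$, $F^{ij}\nabla_i\nabla_j\rho \ge 0$, the sign of $c_{n,k} - F^{ij}g_{ij}$ combines with the sign of $u > 0$ the other way, and one concludes $\max_z(-\rho)$, i.e. $-\min_z\rho$, is nonincreasing; hence $\min_z\rho(\cdot,t) \ge \min_z\rho_0$.

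To make the argument rigorous I would phrase it through Hamilton's trick (or simply test the ODE satisfied by $\varrho_+(t) := \max_z\rho(\cdot,t)$, which is Lipschitz in $t$ and differentiable a.e.): at a.e. $t$, $\frac{d}{dt}\varrho_+(t) = \partial_t\rho(z_0(t),t)$ for a maximum point $z_0(t)$, and the spatial inequalities above give $\frac{d}{dt}\varrho_+ \le 0$; integrating yields $\varrho_+(t) \le \varrho_+(0) = \max_z\rho_0$. The analogous computation for $\varrho_-(t) := \min_z\rho(\cdot,t)$ gives $\frac{d}{dt}\varrho_- \ge 0$. Combining gives \eqref{4.1}.

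The main obstacle is the sign of $c_{n,k} - F^{ij}g_{ij}$, i.e. proving $\sum_i \frac{\partial F}{\partial\lambda_i}(\lambda) \ge c_{n,k}$ for all $\lambda$ in the relevant convex cone; the rest is a routine maximum-principle bookkeeping. This inequality is where strict convexity of $M(t)$ (ensuring $\lambda \in \Gamma_{k+1}$, so that $F$ is well-defined, positive, and concave there) is genuinely used, and one must check that $\sum_i \partial F/\partial\lambda_i$ attains its minimum over the cone exactly at the ray $\lambda = I$ where it equals $c_{n,k}$ — this should follow from concavity and scaling-invariance of $F$ together with Proposition \ref{prop2.4}, but it is the one step that needs care rather than being immediate from the listed facts.
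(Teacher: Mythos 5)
Your upper-bound argument is fine, but the lower bound has a genuine sign error. At a spatial minimum of $\rho$ the equation \eqref{EV-rho} (with $\nabla\rho=0$, $u=\phi>0$, $\phi'/\phi>0$) reads $\partial_t\rho = uF^{ij}\rho_{ij}+\tfrac{\phi'}{\phi}u\,(c_{n,k}-F^{ij}g_{ij})$, and the reaction term does \emph{not} ``combine the other way'': by the very inequality you invoke (which is Lemma \ref{lem4.2}, $\sum_iF^{ii}\ge c_{n,k}$ on $\Gamma_k$), one has $c_{n,k}-F^{ij}g_{ij}\le 0$ at \emph{every} point, minimum or maximum, while $u$ and $\phi'/\phi$ are positive throughout. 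So at the minimum the elliptic term $uF^{ij}\rho_{ij}\ge 0$ and the reaction term $\le 0$ compete, and your term-by-term sign bookkeeping cannot conclude $\partial_t\rho\ge 0$ there. The inequality $\sum_iF^{ii}\ge c_{n,k}$ simply points the wrong way for the lower bound, so the ``symmetric'' step is not a proof.

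The fix is to avoid splitting \eqref{EV-rho} into elliptic plus reaction terms and instead estimate the speed directly at the critical point, which is what the paper does. At a critical point of $\rho$ one has $\omega=1$, $u=\phi$, and $h^i_j=\tfrac{1}{\phi^2}(-\rho_{ij}+\phi\phi'e_{ij})$, so by $1$-homogeneity of $F=\sigma_{k+1}/\sigma_k$,
\begin{align*}
\partial_t\rho \;=\; c_{n,k}\phi'-uF \;=\; c_{n,k}\phi'-\phi'\,
\frac{\sigma_{k+1}\bigl(-\tfrac{\rho_{ij}}{\phi\phi'}+e_{ij}\bigr)}{\sigma_{k}\bigl(-\tfrac{\rho_{ij}}{\phi\phi'}+e_{ij}\bigr)}.
\end{align*}
At a maximum, $\rho_{ij}\le 0$, so the matrix has all eigenvalues $\ge 1$ and monotonicity of $F$ (i.e.\ $F^{ii}>0$) gives $F\ge F(I)=c_{n,k}$, hence $\partial_t\rho\le 0$; at a minimum, $\rho_{ij}\ge 0$ and strict convexity ensures the eigenvalues lie in $(0,1]$, hence $F\le c_{n,k}$ and $\partial_t\rho\ge 0$. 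Combined with Hamilton's trick (as you set up), this yields \eqref{4.1}. Note the ingredient actually used is the monotonicity of $F$ together with $F(I)=c_{n,k}$, not the trace inequality $\sum_iF^{ii}\ge c_{n,k}$.
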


\begin{proof}
At critical points of $\rho$, we have the following critical point conditions,
\begin{align}
 \nabla \rho  = 0,\quad  \omega  = 1, \quad u = \phi,
\end{align}
and then the Weingarten curvature is
\begin{align}
h_j^i  = \frac{1}{\phi^2}( - \rho_{ij}  + \phi \phi 'e_{ij} ),
\end{align}
and then
\begin{align}
 F = \frac{{\sigma _{k + 1} (\lambda )}}{{\sigma _k (\lambda )}} = \frac{{\phi '}}{\phi }\frac{\sigma _{k + 1} ( -\frac{\rho_{ij} }{\phi\phi '} + e_{ij} )}{\sigma _k ( - \frac{\rho_{ij} }{\phi\phi '} + e_{ij} )}.
 \end{align}
So at the critical point, we have
\begin{align}
 \partial_t \rho = c_{n,k} \phi' - u F = c_{n,k} \phi' - \phi ' \frac{\sigma _{k + 1} ( - \frac{\rho_{ij} }{\phi\phi '} + e_{ij} )}{\sigma _k ( - \frac{\rho_{ij} }{\phi\phi '} + e_{ij} )}.
 \end{align}

By standard maximum principle, this proves the upper and lower bounds for $\rho$.
\end{proof}

\subsection{$C^1$ estimates}
\begin{lemma}\label{lem4.2}
Let $F(\lambda) := \frac{\sigma_{k+1}(\lambda)}{\sigma_{k}(\lambda)}$, and $c_{n,k} = F(I) = \frac{n-k}{
k+1}$ where $I = (1, \cdots, 1)$. Then
\begin{align}\label{4.6}
\sum_i F^{ii} \lambda_i^2 \geq \frac{F^2}{c_{n,k}}, \quad \sum_i F^{ii} \geq c_{n,k}, \quad \forall \lambda\in \Gamma_k.
\end{align}
Moreover, if $\lambda \in \bar \Gamma_{k+1}$, then $\sum F^{ii} \leq n-k$.
\end{lemma}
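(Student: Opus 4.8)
The plan is to prove both statements by deriving closed-form expressions for $\sum_i F^{ii}$ and $\sum_i F^{ii}\lambda_i^2$ purely in terms of the $\sigma_j(\lambda)$, and then quoting the Newton--MacLaurin inequality (Proposition \ref{prop2.4}). Writing $F=\sigma_{k+1}/\sigma_k$, Proposition \ref{prop2.1} gives $F^{ii}=\partial F/\partial\lambda_i=\sigma_k^{-2}\big(\sigma_k(\lambda|i)\,\sigma_k-\sigma_{k+1}\,\sigma_{k-1}(\lambda|i)\big)$, and everything below is obtained by manipulating this with the two recursions $\sigma_{m}=\sigma_m(\lambda|i)+\lambda_i\sigma_{m-1}(\lambda|i)$ and $\sum_i\sigma_m(\lambda|i)=(n-m)\sigma_m$.

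\emph{The bound on $\sum_i F^{ii}$.} Summing the formula for $F^{ii}$ over $i$ and using $\sum_i\sigma_m(\lambda|i)=(n-m)\sigma_m$ yields
\begin{align*}
\sum_i F^{ii}=(n-k)-(n-k+1)\frac{\sigma_{k-1}\sigma_{k+1}}{\sigma_k^2}.
\end{align*}
By Proposition \ref{prop2.4}, $\sigma_{k-1}\sigma_{k+1}\le \frac{C_n^{k-1}C_n^{k+1}}{(C_n^k)^2}\sigma_k^2=\frac{k(n-k)}{(k+1)(n-k+1)}\sigma_k^2$, and inserting this gives $\sum_i F^{ii}\ge(n-k)\big(1-\frac{k}{k+1}\big)=c_{n,k}$. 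If moreover $\lambda\in\bar\Gamma_{k+1}$, then $\sigma_{k-1},\sigma_{k+1}\ge0$, so the subtracted term in the displayed identity is nonnegative and $\sum_i F^{ii}\le n-k$ follows at once.

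\emph{The bound on $\sum_i F^{ii}\lambda_i^2$.} Multiplying the formula for $F^{ii}$ by $\lambda_i$ and using $\lambda_i\sigma_k(\lambda|i)=\sigma_{k+1}-\sigma_{k+1}(\lambda|i)$ and $\lambda_i\sigma_{k-1}(\lambda|i)=\sigma_k-\sigma_k(\lambda|i)$ collapses it to $\lambda_i F^{ii}\sigma_k^2=\sigma_{k+1}\sigma_k(\lambda|i)-\sigma_k\sigma_{k+1}(\lambda|i)$; multiplying once more by $\lambda_i$, now using $\lambda_i\sigma_k(\lambda|i)=\sigma_{k+1}-\sigma_{k+1}(\lambda|i)$ and $\lambda_i\sigma_{k+1}(\lambda|i)=\sigma_{k+2}-\sigma_{k+2}(\lambda|i)$, and summing over $i$, I obtain
\begin{align*}
\sum_i F^{ii}\lambda_i^2=\frac{(k+1)\sigma_{k+1}^2-(k+2)\sigma_k\sigma_{k+2}}{\sigma_k^2}.
\end{align*}
Since $F^2/c_{n,k}=\frac{k+1}{n-k}\cdot\frac{\sigma_{k+1}^2}{\sigma_k^2}$, the desired inequality is equivalent, after clearing $\sigma_k^2>0$, to $\sigma_k\sigma_{k+2}\le\frac{(k+1)(n-k-1)}{(k+2)(n-k)}\sigma_{k+1}^2$, and this is precisely Proposition \ref{prop2.4} in the form $\sigma_k\sigma_{k+2}\le\frac{C_n^kC_n^{k+2}}{(C_n^{k+1})^2}\sigma_{k+1}^2$, the constant being $\frac{(k+1)(n-k-1)}{(n-k)(k+2)}$.

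The only point needing care is that the naive route to the first inequality fails: Cauchy--Schwarz applied to Euler's identity $\sum_i F^{ii}\lambda_i=F$ only gives $\sum_i F^{ii}\lambda_i^2\ge F^2/\sum_i F^{ii}$, and since $\sum_i F^{ii}\ge c_{n,k}$ this estimate points the wrong way; it is the explicit identity for $\sum_i F^{ii}\lambda_i^2$ that makes the sharp constant $c_{n,k}$ appear (equality at $\lambda=I$, exactly as for $\sum_i F^{ii}\ge c_{n,k}$). A routine remark for the write-up is that the instances of Proposition \ref{prop2.4} invoked here are valid for all $\lambda\in\Gamma_k$, since whenever $\sigma_{k+1}$ or $\sigma_{k+2}$ is nonpositive the relevant inequality holds trivially.
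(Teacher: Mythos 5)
Your proof is correct and follows essentially the same route as the paper: both derive the identities $\sum_i F^{ii}=\big[(n-k)\sigma_k^2-(n-k+1)\sigma_{k-1}\sigma_{k+1}\big]/\sigma_k^2$ and $\sum_i F^{ii}\lambda_i^2=\big[(k+1)\sigma_{k+1}^2-(k+2)\sigma_k\sigma_{k+2}\big]/\sigma_k^2$ and then conclude by Newton--MacLaurin, with the same sign observation giving $\sum_i F^{ii}\le n-k$ on $\bar\Gamma_{k+1}$. Your closing remark about the trivial cases $\sigma_{k+1}\le0$ or $\sigma_{k+2}\le0$ is a welcome extra precision but does not change the argument.
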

\begin{proof} The proof below is from \cite{BGL18}
    We first derive
    \begin{equation}
      \begin{array}[]{rll}
        \sum F^{ii}\lambda_i^2 =&\sum \frac{\sigma_{k}(\lambda|i)\lambda_i^2\sigma_k -\sigma_{k+1}\sigma_{k-1}(\lambda|i)\lambda_i^2}{\sigma_k^2}\\
\\
=&\frac{[\sigma_1\sigma_{k+1}-(k+2)\sigma_{k+2}]\sigma_k -\sigma_{k+1}[\sigma_1\sigma_k-(k+1)\sigma_{k+1}]}{\sigma_k^2}\\
\\
=&\frac{(k+1)\sigma^2_{k+1}-(k+2)\sigma_{k+2}\sigma_k}{\sigma_k^2}\\
\\
\ge& \frac{k+1}{n-k}F^2=\frac{1}{c_{n,k}}F^2,
      \end{array}
      \label{}
    \end{equation}
where the last inequality follows from Newton-McLaurin inequality.

Similarly, we have
    \begin{equation}
      \begin{array}[]{rll}
       \sum  F^{ii} =&\sum \frac{\sigma_{k}(\lambda|i)\sigma_k(\lambda) -\sigma_{k+1}(\lambda)\sigma_{k-1}(\lambda|i)}{\sigma_k^2}\\
\\
=&\frac{(n-k)\sigma^2_{k} -(n-k+1)\sigma_{k+1}\sigma_{k-1}}{\sigma_k^2}\\
\\
\ge& \frac{n-k}{k+1}=c_{n,k},
      \end{array}
      \label{u}
    \end{equation}
where the last inequality follows from Newton-McLaurin inequality. If $\lambda\in \bar \Gamma_{k+1}$, then $\sigma_{k+1}\sigma_{k-1}\ge 0$ and we conclude that $\sum F^{ii} \leq n-k$ from the second identity in (\ref{u}).
\end{proof}
%\begin{proof} See \cite{BGL18}. \end{proof}

\begin{theorem}\label{th4.3}
Let $M_0$ be a strictly convex, radial graph of positive function $\rho_0$ over $\mathbb{S}^n$ embedded in $\mathbb{S}^{n+1}$. If $M(t)$ solves the flow \eqref{1.3} with the initial value $M_0$, then for any $(z,t) \in \mathbb{S}^n \times [0, T)$
\begin{align} \label{4.7}
u(z,t) \geq \mathop {\min }\limits_{z \in \mathbb{S}^n} u(z, 0).
\end{align}
As a consequence, we have $C^1$ bound for $\rho$, that is,
\begin{align}
|\rho|_{C^1(\mathbb{S}^n)} \leq C,
\end{align}
where $C$ depends only on the initial data.
\end{theorem}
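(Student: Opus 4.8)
The plan is to run the parabolic maximum principle on the evolution equation \eqref{2.32} of Lemma \ref{lem2.11}. I set $\mu(t):=\min_{z\in\mathbb{S}^n}u(z,t)$ and let $z_t$ be a point realizing this spatial minimum. Since $M_0$ is strictly convex, $\mu(0)>0$, and I work on the maximal subinterval of $[0,T)$ on which $\mu>0$. At $(z_t,t)$ one has $\nabla u=0$ and $\nabla^2 u\ge 0$; strict convexity of $M(t)$ makes the matrix $F^{ij}=\partial F/\partial h_{ij}$ positive definite, so $uF^{ij}\nabla_i\nabla_j u\ge 0$ there. Substituting this, together with $\nabla u=0$ (which annihilates the term $F\nabla\Phi\nabla u$), into \eqref{2.32} and using Hamilton's trick to differentiate the minimum, I obtain
\[
\frac{d}{dt}\mu(t)\ \ge\ \Big[-c_{n,k}\,\nabla\Phi\nabla\phi'+(c_{n,k}\phi'-2uF)\phi'+u^2F^{ij}(h^2)_{ij}\Big]\Big|_{(z_t,t)}.
\]

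The next step is to recognize the right-hand side as manifestly nonnegative. The identity $\nabla\phi'=-\nabla\Phi$ (already used in the proof of Lemma \ref{lem2.13}) turns the first term into $c_{n,k}|\nabla\Phi|^2\ge 0$. Choosing coordinates that diagonalize $h_{ij}$ gives $F^{ij}(h^2)_{ij}=\sum_i F^{ii}\lambda_i^2$, and Lemma \ref{lem4.2} bounds this below by $F^2/c_{n,k}$; hence
\[
(c_{n,k}\phi'-2uF)\phi'+u^2F^{ij}(h^2)_{ij}\ \ge\ c_{n,k}(\phi')^2-2uF\phi'+\frac{u^2F^2}{c_{n,k}}\ =\ \frac{1}{c_{n,k}}\big(c_{n,k}\phi'-uF\big)^2\ \ge\ 0.
\]
Thus $\frac{d}{dt}\mu(t)\ge 0$ as long as $\mu>0$, and a standard continuation argument then shows that $\mu$ stays positive on all of $[0,T)$, so that $u(z,t)\ge\min_{\mathbb{S}^n}u(\cdot,0)=:c_0>0$, which is \eqref{4.7}.

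For the asserted $C^1$ bound I would use the graph formula \eqref{2.18}, $u=\phi^2/\sqrt{\phi^2+|\nabla\rho|^2}$, which rearranges to $|\nabla\rho|^2=\phi^2(\phi^2-u^2)/u^2\le \phi^4/c_0^2$. Combined with the $C^0$ bound of Theorem \ref{th4.1} (which confines $\rho$, hence $\phi=\sin\rho$, to a compact subinterval of $(0,\tfrac{\pi}{2})$), this yields $|\nabla\rho|\le C$ and therefore $|\rho|_{C^1(\mathbb{S}^n)}\le C$ with $C$ depending only on the initial data. The only genuine content here is the sum-of-squares reduction powered by Lemma \ref{lem4.2}; the steps that need a little care are the continuity argument keeping $u$ positive (so that the second-order term has the correct sign for the maximum principle) and the routine justification for differentiating $\mu(t)=\min_z u(z,t)$.
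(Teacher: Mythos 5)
Your proposal is correct and follows essentially the same route as the paper: apply the maximum principle to the evolution equation \eqref{2.32} at the spatial minimum of $u$, use $\nabla\phi'=-\nabla\Phi$ and the inequality $F^{ij}(h^2)_{ij}\ge F^2/c_{n,k}$ from Lemma \ref{lem4.2} to complete the square $c_{n,k}\bigl(\phi'-\tfrac{1}{c_{n,k}}uF\bigr)^2\ge 0$. The only difference is that you spell out the continuation/Hamilton's-trick bookkeeping and the deduction of the gradient bound from \eqref{2.18}, which the paper leaves implicit; both are fine.
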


\begin{proof}
For any fixed $t \in (0, T)$, we have at the minimum point of $u(z,t)$,
\[
\nabla u =0.
\]
So from Lemma \ref{lem2.11}, we have
\begin{align}
\partial _t u - uF^{ij} u_{ij}  =& c_{n,k} |\nabla \Phi |^2 + (c_{n,k} \phi ' - 2uF)\phi ' + u^2 F^{ij} (h^2 )_{ij}\notag \\
\geq& c_{n,k} |\nabla \Phi |^2 + c_{n,k}( \phi ' - \frac{1}{c_{n,k}}uF ) ^2\geq 0,
\end{align}
which finishes the proof of \eqref{4.7}.
\end{proof}

\subsection{Uniform bounds of $F$}
\begin{lemma}\label{lem4.4}
Let $\lambda =(\lambda_1, \cdots, \lambda_n) \in \Gamma_{n}$, and $\lambda_1 \geq \cdots \geq \lambda_n$. Then for $1 \leq m \leq n-1$, we have
\begin{align}\label{4.10}
\frac{{m(n - m)\sigma _m (\lambda )^2  - (m + 1)(n - m + 1)\sigma _{m + 1} (\lambda )\sigma _{m - 1} (\lambda )}}{{\sigma _m (\lambda )^2 }} \sim \frac{(\lambda_1-\lambda_n)^2}{{\lambda_1}^2},
\end{align}
where $f \sim g$ means $ \frac{1}{C(n,m)} g \leq f \leq C(n,m) g$ for some constant $C(n,m) >0$.
\end{lemma}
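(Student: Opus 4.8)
The plan is to reduce the stated equivalence to a clean algebraic identity and then estimate the numerator from above and below in terms of $(\lambda_1-\lambda_n)^2$. First I would rewrite the numerator $N:=m(n-m)\sigma_m^2-(m+1)(n-m+1)\sigma_{m+1}\sigma_{m-1}$ in a manifestly nonnegative ``sum of squares'' form. Concretely, one expands $\sigma_m^2$ and $\sigma_{m+1}\sigma_{m-1}$ as sums over pairs of index sets and collects terms: this leads to an expression of the shape
\begin{align*}
N=\sum_{i<j} c_{ij}\,(\lambda_i-\lambda_j)^2\,\sigma_{m-1}(\lambda\,|\,ij)\,Q_{ij}(\lambda),
\end{align*}
where the $c_{ij}>0$ are combinatorial constants depending only on $n,m$ and $Q_{ij}$ is a nonnegative symmetric polynomial in the remaining variables (a combination of $\sigma_{m-1}(\lambda|ij)$-type terms). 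The key point is that the ``diagonal'' contributions cancel and only differences $(\lambda_i-\lambda_j)^2$ survive; this is exactly the phenomenon behind Newton--MacLaurin, and a convenient reference for the needed identity is the divided-difference computation in \cite{S05}. Alternatively, one can homogenize: divide through by $\sigma_m^2$ and note that $N/\sigma_m^2$ is a smooth function on the compact set $\{\lambda\in\bar\Gamma_n:\lambda_1=1,\ 0\le\lambda_n\le\cdots\le\lambda_1\}$ which vanishes precisely on the ray $\lambda_1=\cdots=\lambda_n$, i.e. exactly where $(\lambda_1-\lambda_n)^2/\lambda_1^2$ vanishes.

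For the upper bound $N\le C(n,m)(\lambda_1-\lambda_n)^2/\lambda_1^2\cdot\sigma_m^2$, I would use that in $\Gamma_n$ all $\lambda_i>0$, so every $(\lambda_i-\lambda_j)^2\le (\lambda_1-\lambda_n)^2$, and that each product $\sigma_{m-1}(\lambda|ij)Q_{ij}(\lambda)$ is bounded by a fixed multiple of $\sigma_m(\lambda)^2/\lambda_1^2$ via Proposition~\ref{prop2.3} and the Newton--MacLaurin inequality of Proposition~\ref{prop2.4} (each such product has total degree $2m-2$ in the $\lambda$'s, matching $\sigma_m^2/\lambda_1^2$). Summing over the finitely many pairs $(i,j)$ gives the upper estimate. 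For the lower bound, it suffices to retain a single favorable term, e.g. the $(i,j)=(1,n)$ term, and discard the rest since all summands are nonnegative; then one needs $c_{1n}\,\sigma_{m-1}(\lambda|1n)\,Q_{1n}(\lambda)\ge \tfrac{1}{C}\sigma_m(\lambda)^2/\lambda_1^2$, which again follows from ordering $\lambda_1\ge\cdots\ge\lambda_n>0$ together with Proposition~\ref{prop2.3} (so that $\sigma_{m-1}(\lambda|1n)$ controls the other $\sigma_{m-1}(\lambda|ij)$ from above) and Proposition~\ref{prop2.4}.

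The main obstacle I anticipate is producing the precise sum-of-squares identity for $N$ with the correct positive constants, and in particular verifying that the coefficient $c_{1n}$ of the $(\lambda_1-\lambda_n)^2$ term is strictly positive and that $Q_{1n}$ does not degenerate on $\Gamma_n$ beyond the diagonal; getting the degree bookkeeping to match $\sigma_m^2/\lambda_1^2$ on both sides is the delicate part of the argument. Once the identity is in hand, both inequalities are a routine application of the two propositions already available in Section~2, so I would spend the bulk of the write-up on that algebraic identity and treat the rest briefly.
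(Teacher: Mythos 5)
Your write-up never actually produces the identity on which everything hinges: you only posit the ``shape'' $N=\sum_{i<j}c_{ij}(\lambda_i-\lambda_j)^2\sigma_{m-1}(\lambda|ij)Q_{ij}$ and yourself flag the existence of positive $c_{ij}$ and nondegenerate $Q_{ij}$ as the main obstacle. That is the core of the lemma, and it can be done in a few lines with Proposition \ref{prop2.1}: write $m\sigma_m=\sum_i\lambda_i\sigma_{m-1}(\lambda|i)$, $(n-m)\sigma_m=\sum_j\sigma_m(\lambda|j)$, $(m+1)\sigma_{m+1}=\sum_j\lambda_j\sigma_m(\lambda|j)$, $(n-m+1)\sigma_{m-1}=\sum_i\sigma_{m-1}(\lambda|i)$, so that $N=\sum_{i,j}(\lambda_i-\lambda_j)\sigma_{m-1}(\lambda|i)\sigma_m(\lambda|j)$; antisymmetrizing over $i<j$ and expanding $\sigma_{m-1}(\lambda|i)=\sigma_{m-1}(\lambda|ij)+\lambda_j\sigma_{m-2}(\lambda|ij)$, $\sigma_m(\lambda|j)=\sigma_m(\lambda|ij)+\lambda_i\sigma_{m-1}(\lambda|ij)$ yields exactly $N=\sum_{i<j}(\lambda_i-\lambda_j)^2\bigl[\sigma_{m-1}(\lambda|ij)^2-\sigma_{m-2}(\lambda|ij)\sigma_m(\lambda|ij)\bigr]$, with the bracket comparable to $\sigma_{m-1}(\lambda|ij)^2$ by Newton--MacLaurin; in particular $c_{ij}=1$. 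Your fallback compactness remark does not repair this gap: two nonnegative continuous functions with the same zero set need not be comparable (think $x^2$ versus $x^4$), and on the boundary slice of $\bar\Gamma_n$ with $\lambda_1=1$ the quotient $N/\sigma_m^2$ can degenerate, so no two-sided bound follows from that observation.

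More seriously, the upper bound as you set it up fails. You propose to discard the coupling by estimating $(\lambda_i-\lambda_j)^2\le(\lambda_1-\lambda_n)^2$ and separately claiming $\sigma_{m-1}(\lambda|ij)Q_{ij}\le C(n,m)\,\sigma_m^2/\lambda_1^2$ ``by degree bookkeeping''. Degrees match, but in $\Gamma_n$ the eigenvalues live on different scales and homogeneity gives no pointwise bound: take $n=3$, $m=2$, $\lambda=(1,\epsilon,\epsilon/2)$ and the pair $(i,j)=(2,3)$; then $\sigma_{m-1}(\lambda|ij)^2=\lambda_1^2=1$ while $\sigma_2^2/\lambda_1^2=O(\epsilon^2)$, so the termwise estimate is off by an arbitrarily large factor. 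The inequality only holds because the small factor $(\lambda_i-\lambda_j)^2$ must stay coupled to the large ratio $\sigma_{m-1}(\lambda|ij)^2/\sigma_m^2$: using $\sigma_m\sim\lambda_1\cdots\lambda_m$ from Proposition \ref{prop2.3}, one splits into the three cases $m\le i<j$, $i<m<j$, $i<j\le m$ and finds terms of the form $(\lambda_i-\lambda_j)^2/\lambda_m^2$, $(\lambda_i-\lambda_j)^2/\lambda_i^2$ and $(\lambda_i-\lambda_j)^2\lambda_{m+1}^2/(\lambda_i\lambda_j)^2$, which are bounded by $(\lambda_1-\lambda_n)^2/\lambda_1^2$ using positivity and the ordering ($\lambda_i-\lambda_j\le\lambda_i$, $\lambda_n/\lambda_m\ge\lambda_n/\lambda_1$, $\lambda_{m+1}\le\lambda_m$). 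Your lower bound (retaining only the $(1,n)$ term, where $\lambda_1\sigma_{m-1}(\lambda|1n)\ge\lambda_1\cdots\lambda_m\ge\sigma_m/C$) is sound once the identity is in hand, and is a reasonable shortcut compared to the paper's uniform comparison chain, but the upper bound requires the coupled case analysis rather than the decoupled estimate you propose.
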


\begin{proof}
The case $m=1$ is trivial, as \[(n-1)\sigma_1(\lambda)^2-2n\sigma_2(\lambda)=\sum_{i<j}(\lambda_i-\lambda_j)^2.\] We may assume $1<1<n$.
By direct computation we can derive
\begin{align}
m&(n - m)\sigma _m (\lambda )^2  - (m + 1)(n - m + 1)\sigma _{m + 1} (\lambda )\sigma _{m - 1} (\lambda ) \notag \\
=& [m\sigma _m (\lambda )][(n - m)\sigma _m (\lambda )] - [(m + 1)\sigma _{m + 1} (\lambda )][(n - m + 1)\sigma _{m - 1} (\lambda )] \notag\\
=& [\sum\limits_i {\lambda _i \sigma _{m - 1} (\lambda |i)} ][\sum\limits_j {\sigma _m (\lambda |j)} ] - [\sum\limits_j {\lambda _j \sigma _m (\lambda |j)} ][\sum\limits_i {\sigma _{m - 1} (\lambda |i)} ]\notag \\
=& \sum\limits_{i,j} {(\lambda _i  - \lambda _j )\sigma _{m - 1} (\lambda |i)\sigma _m (\lambda |j)} \notag \\
=& \sum\limits_{i < j} {(\lambda _i  - \lambda _j )[\sigma _{m - 1} (\lambda |i)\sigma _m (\lambda |j) - \sigma _{m - 1} (\lambda |j)\sigma _m (\lambda |i)]}  \notag\\
=& \sum\limits_{i < j} {(\lambda _i  - \lambda _j )[\left( {\sigma _{m - 1} (\lambda |ij) + \lambda _j \sigma _{m - 2} (\lambda |ij)} \right)\left( {\sigma _m (\lambda |ij) + \lambda _i \sigma _{m - 1} (\lambda |ij)} \right)} \notag \\
&- \left( {\sigma _{m - 1} (\lambda |ij) + \lambda _i \sigma _{m - 2} (\lambda |ij)} \right)\left( {\sigma _m (\lambda |ij) + \lambda _j \sigma _{m - 1} (\lambda |ij)} \right)] \notag\\
=& \sum\limits_{i < j} {(\lambda _i  - \lambda _j ) \cdot (\lambda _i  - \lambda _j )[\sigma _{m - 1} (\lambda |ij)^2  - \sigma _{m - 2} (\lambda |ij)\sigma _m (\lambda |ij)]}  \notag\\
\sim& \sum\limits_{i < j} {(\lambda _i  - \lambda _j )^2 \sigma _{m - 1} (\lambda |ij)^2 },
\end{align}
where the last $\sim$ follows from Newton-MacLaurin inequality. Then we can obtain
\begin{align}
&\frac{{m(n - m)\sigma _m (\lambda )^2  - (m + 1)(n - m + 1)\sigma _{m + 1} (\lambda )\sigma _{m - 1} (\lambda )}}{{\sigma _m (\lambda )^2 }} \notag \\
\sim& \sum\limits_{i < j} {(\lambda _i  - \lambda _j )^2 \frac{{\sigma _{m - 1} (\lambda |ij)^2 }}{{\sigma _m (\lambda )^2 }}}  \notag\\
\sim& \sum\limits_{m \le i < j} {(\lambda _i  - \lambda _j )^2 \frac{{(\lambda _1  \cdots \lambda _{m - 1} )^2 }}{{(\lambda _1  \cdots \lambda _m )^2 }}}  + \sum\limits_{i < m < j} {(\lambda _i  - \lambda _j )^2 \frac{{(\lambda _1  \cdots \widehat{\lambda _i } \cdots \lambda _m )^2 }}{{(\lambda _1  \cdots \lambda _m )^2 }}}  \notag\\
&+ \sum\limits_{i < j \le m} {(\lambda _i  - \lambda _j )^2 \frac{{(\lambda _1  \cdots \widehat{\lambda _i } \cdots \widehat{\lambda _j } \cdots \lambda _{m + 1} )^2 }}{{(\lambda _1  \cdots \lambda _m )^2 }}}  \notag\\
\sim& \sum\limits_{m \le i < j} {\frac{{(\lambda _i  - \lambda _j )^2 }}{{\lambda _m ^2 }}}  + \sum\limits_{i < m < j} {\frac{{(\lambda _i  - \lambda _j )^2 }}{{\lambda _i ^2 }}}  + \sum\limits_{i < j \le m} {\frac{{(\lambda _i  - \lambda _j )^2 \lambda _{m + 1} ^2 }}{{(\lambda _i \lambda _j )^2 }}} \notag \\
\sim& \frac{{(\lambda _m  - \lambda _n )^2 }}{{\lambda _m ^2 }} + \frac{{(\lambda _1  - \lambda _n )^2 }}{{\lambda _1 ^2 }} + \frac{{(\lambda _1  - \lambda _m )^2 }}{{\lambda _1 ^2 }}\frac{{\lambda _{m + 1} ^2 }}{{\lambda _m ^2 }} \notag\\
\sim& \frac{{(\lambda _1  - \lambda _n )^2 }}{{\lambda _1 ^2 }},
\end{align}
where $\widehat{\lambda _i }$ means that $\lambda _i$ is omitted. The proof is finished.
\end{proof}

Due to Lemma \ref{lem4.4}, we obtain the uniform bound of $F$ as follows.
\begin{theorem}\label{th4.5}
Let $M_0$ be a strictly convex, radial graph of positive function $\rho_0$ over $\mathbb{S}^n$ embedded in $\mathbb{S}^{n+1}$. If $M(t)$ solves the flow \eqref{1.3} with the initial value $M_0$, then for any $(z,t) \in \mathbb{S}^n \times [0, T)$
\begin{align} \label{4.13}
\frac{1}{C} \leq F \leq C,
\end{align}
where $C$ depends only on $n$, $k$ and the initial data.
\end{theorem}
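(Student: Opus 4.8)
The plan is to apply the parabolic maximum principle to the evolution equation \eqref{2.37} of $F$ from Lemma \ref{lem2.13}, using Lemma \ref{lem4.4} to control the zeroth-order terms and the $C^{0}$, $C^{1}$ bounds from Theorems \ref{th4.1} and \ref{th4.3}. For the upper bound, put $\Psi(t)=\max_{M(t)}F$ and work at a point where this spatial maximum is attained. There $\nabla F=0$, so the transport terms $2F^{ml}\nabla_{m}u\,\nabla_{l}F+F\,\nabla\Phi\,\nabla F$ in \eqref{2.37} vanish; moreover $\nabla^{2}F\le 0$ as a symmetric bilinear form, and since $M(t)$ is strictly convex we have $\lambda\in\Gamma_{n}\subset\Gamma_{k+1}$, so $(F^{ml})$ is positive definite and $F^{ml}\nabla_{m}\nabla_{l}F\le 0$. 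Hence at that point
\[
\partial_{t}F\ \le\ -\big[c_{n,k}F^{ml}(h^{2})_{ml}-F^{2}\big]\phi' \ +\ uF\Big[\sum_{m}F^{mm}-c_{n,k}\Big].
\]
By Lemma \ref{lem4.2} both bracketed expressions are $\ge 0$, so the first (``good'') term has a favourable sign while the second (``bad'') term does not; the whole argument rests on showing the good term dominates when $F$ is large.

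The key step is to make this quantitative via Lemma \ref{lem4.4}. Expanding exactly as in the proof of Lemma \ref{lem4.2} gives
\[
c_{n,k}F^{ml}(h^{2})_{ml}-F^{2}=\frac{(k+1)(n-k-1)\sigma_{k+1}^{2}-(k+2)(n-k)\sigma_{k+2}\sigma_{k}}{(k+1)\,\sigma_{k}^{2}},
\]
\[
\sum_{m}F^{mm}-c_{n,k}=\frac{k(n-k)\sigma_{k}^{2}-(k+1)(n-k+1)\sigma_{k+1}\sigma_{k-1}}{(k+1)\,\sigma_{k}^{2}},
\]
and the numerators are precisely the left-hand side of \eqref{4.10} with $m=k+1$ and $m=k$. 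Lemma \ref{lem4.4} (with the curvatures ordered $\lambda_{1}\ge\dots\ge\lambda_{n}$) then yields
\[
c_{n,k}F^{ml}(h^{2})_{ml}-F^{2}\sim\frac{(\lambda_{1}-\lambda_{n})^{2}}{\lambda_{1}^{2}}F^{2},\qquad \sum_{m}F^{mm}-c_{n,k}\sim\frac{(\lambda_{1}-\lambda_{n})^{2}}{\lambda_{1}^{2}}.
\]
Now Theorem \ref{th4.1} gives $\phi'=\cos\rho\ge c_{0}:=\cos(\max_{\mathbb S^{n}}\rho_{0})>0$ and $0<u\le 1$, and Theorem \ref{th4.3} gives $u\ge u_{0}:=\min_{\mathbb S^{n}}u(\cdot,0)>0$. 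Substituting the lower estimate for the first bracket and the upper estimate for the second, at the maximum point
\[
\partial_{t}F\ \le\ \frac{(\lambda_{1}-\lambda_{n})^{2}}{\lambda_{1}^{2}}\,F\Big(C-\frac{c_{0}}{C}F\Big),
\]
for a constant $C=C(n,k)$, which is $<0$ whenever $F>C^{2}/c_{0}$. A standard ODE comparison for $\Psi$ then gives $\Psi(t)\le\max\{\Psi(0),C^{2}/c_{0}\}$, i.e.\ the upper bound in \eqref{4.13}.

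The lower bound is symmetric: with $\psi(t)=\min_{M(t)}F$, which is positive because $M(t)$ is strictly convex, one works at a spatial minimum, where $\nabla F=0$ and $F^{ml}\nabla_{m}\nabla_{l}F\ge 0$. Keeping the $\ge$ direction in the estimates above (now the upper bound for the first bracket and the lower bound for the second, together with $u\ge u_{0}$ and $\phi'\le 1$) one obtains $\partial_{t}F\ge\frac{(\lambda_{1}-\lambda_{n})^{2}}{\lambda_{1}^{2}}F(\frac{u_{0}}{C}-CF)>0$ whenever $F<u_{0}/C^{2}$, hence $\psi(t)\ge\min\{\psi(0),u_{0}/C^{2}\}>0$.

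I expect the delicate point to be this two-sided comparison: absorbing the bad term $uF[\sum_{m}F^{mm}-c_{n,k}]$ into the good one in the upper-bound argument requires \emph{both} the lower estimate for $c_{n,k}F^{ml}(h^{2})_{ml}-F^{2}$ and the upper estimate for $\sum_{m}F^{mm}-c_{n,k}$, which is exactly why \eqref{4.10} is stated as a two-sided equivalence, and getting the constants to line up is the heart of the matter. The one case this does not cover directly is $k=n-1$, where $\sum_{i}F^{ii}\lambda_{i}^{2}=nF^{2}$ and $c_{n,n-1}=1/n$ force $c_{n,k}F^{ml}(h^{2})_{ml}-F^{2}\equiv 0$, so the good term disappears: there the lower bound is immediate from $\partial_{t}\min_{M(t)}F\ge 0$, while the upper bound must instead come from the crude estimate $\sum_{m}F^{mm}\le n-k=1$ of Lemma \ref{lem4.2} together with $u\le 1$, giving $\partial_{t}\Psi\le\frac{k}{k+1}\Psi$ and hence $\Psi(t)\le\Psi(0)e^{kt/(k+1)}$ on the existence interval $[0,T)$.
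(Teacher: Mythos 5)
For $1\le k\le n-2$ your argument coincides with the paper's proof: both work at spatial extrema of $F$ using the evolution equation \eqref{2.37}, and both invoke Lemma \ref{lem4.4} (with $m=k+1$ and $m=k$) to see that $c_{n,k}F^{ml}(h^2)_{ml}-F^2$ and $\sum_m F^{mm}-c_{n,k}$ are comparable to $\frac{(\lambda_1-\lambda_n)^2}{\lambda_1^2}F^2$ and $\frac{(\lambda_1-\lambda_n)^2}{\lambda_1^2}$ respectively, so that the good term dominates for $F$ large and the bad term dominates for $F$ small; your quantitative version of this step is correct and in fact more explicit than the paper's.

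The two endpoint cases are genuine gaps. For $k=n-1$, your estimate $\partial_t\Psi\le\frac{k}{k+1}\Psi$, hence $\Psi(t)\le\Psi(0)e^{kt/(k+1)}$, does not prove the theorem: the constant $C$ in \eqref{4.13} must depend only on $n$, $k$ and the initial data, uniformly on $[0,T)$, and $T$ is not assumed finite (a time-uniform bound is exactly what the intended long-time/convergence application needs). Since the good term vanishes identically when $k=n-1$, a different mechanism is required; the paper applies the maximum principle to the auxiliary function $P=F+\frac{1}{u}$, so that the evolution equation \eqref{2.32} of $u$, combined with the exact identity $F^{ij}(h^2)_{ij}=\frac{F^2}{c_{n,n-1}}$, contributes in \eqref{4.23} a term $-\frac{F^2}{c_{n,k}}$ which dominates the remaining terms (at most linear in $F$) and yields a uniform upper bound. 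Secondly, your two-sided argument silently assumes $k\ge 1$: for $k=0$ one has $\sum_m F^{mm}-c_{n,0}\equiv 0$ (and Lemma \ref{lem4.4} does not apply with $m=0$), so at a spatial minimum of $F$ there is no positive term at all and your lower-bound comparison collapses. The upper bound for $k=0$ does follow as you argue, since the bad term is absent, but for the lower bound the paper instead runs the maximum principle on $uF$ at its minimum, using \eqref{2.38}. To complete the proof you therefore need these auxiliary-function arguments for the upper bound when $k=n-1$ and for the lower bound when $k=0$.
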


\begin{proof}
For any fixed $t \in (0, T)$, we have at the critical points of $F$,
\[
\nabla F =0.
\]
So from Lemma \ref{lem2.13}, we can get
\begin{align} \label{4.14}
 \partial _t F - uF^{ij} \nabla _i \nabla _j F =& - c_{n,k} \phi 'F^2 [ \frac{F^{ij} (h^2 )_{ij}}{F^2}  - \frac{1}{c_{n,k}}] +  uF [\sum {F^{ii} }  - c_{n,k} ].
\end{align}
In the following, we divide the proof of \eqref{4.13} into three cases.

Firstly, for the Case: $1 \leq k \leq n-2$, we know from Lemma \ref{lem4.4},
\begin{align}
 \frac{{F^{ij} (h^2 )_{ij} }}{{F^2 }} - \frac{1}{{c_{n,k} }} = \frac{{(k + 1)\frac{{n - k - 1}}{{n - k}}\sigma _{k + 1} ^2  - (k + 2)\sigma _{k + 2} \sigma _k }}{{\sigma _{k + 1} ^2 }} \sim \frac{{(\lambda_1  - \lambda_n )^2 }}{{\lambda_1 ^2 }},
 \end{align}
 and
\begin{align}
 \sum {F^{ii} }  - c_{n,k}  = \frac{{\frac{k}{{k + 1}}(n - k)\sigma _k ^2  - (n - k + 1)\sigma _{k + 1} \sigma _{k - 1} }}{{\sigma _k ^2 }} \sim \frac{{(\lambda_1  - \lambda_n )^2 }}{{\lambda_1 ^2 }}.
\end{align}
Thus
\begin{align}
 \frac{{F^{ij} (h^2 )_{ij} }}{{F^2 }} - \frac{1}{{c_{n,k} }} \sim \sum {F^{ii} }  - c_{n,k}.
\end{align}
Hence \eqref{4.13} holds.

Secondly, for the Case: $k=0$, we can directly get,
\begin{align}
 \frac{{F^{ij} (h^2 )_{ij} }}{{F^2 }} - \frac{1}{{c_{n,k} }} \geq 0,
 \end{align}
 and
\begin{align}
 \sum {F^{ii} }  - c_{n,k}  = 0,
            \end{align}
so we can get $ F \leq C$ from \eqref{4.14}. To prove the lower bound of $F$, we consider the minimum point of $uF$, and then we can get $ F \geq \frac{1}{C}$ from \eqref{2.38}. Hence \eqref{4.13} holds.

Lastly, we consider the Case: $k= n-1$ in the following. It is easy to know
\begin{align}
 \frac{{F^{ij} (h^2 )_{ij} }}{{F^2 }} - \frac{1}{{c_{n,k} }} =0,
 \end{align}
 and
\begin{align}
(n-k) - c_{n,k} \geq \sum {F^{ii} }  - c_{n,k} \geq 0,
\end{align}
so we can get $ F \geq \frac{1}{C}$ from \eqref{4.14}.

To prove the upper bound of $F$, we consider the maximum point of $P =: F{\rm{ + }}\frac{1}{u}$. At the maximum point of $P$, we have
\begin{align}
 0 = \nabla _i P = \nabla _i F - \frac{1}{{u^2 }}\nabla _i u,
\end{align}
and then
\begin{align} \label{4.23}
 \partial _t P - uF^{ij} \nabla _i \nabla _j P =& \partial _t F - uF^{ij} \nabla _i \nabla _j F \notag \\
  &- \frac{1}{{u^2 }}\left[ {\partial _t u - uF^{ij} \nabla _i \nabla _j u} \right] - uF^{ij}  \cdot \frac{2}{{u^3 }}\nabla _i u\nabla _j u \notag \\
  =& 2F^{ij} \nabla _i u\nabla _j F + F\nabla \Phi \nabla F + uF[\sum {F^{ii} }  - c_{n,k} ] \notag \\
  &- \frac{1}{{u^2 }}\left[ {-c_{n,k} |\nabla \Phi |^2  + F\nabla \Phi \nabla u + (c_{n,k} \phi ' - 2uF)\phi ' + u^2 F^{ij} (h^2 )_{ij} } \right] \notag \\
  &- uF^{ij}  \cdot \frac{2}{{u^3 }}\nabla _i u\nabla _j u \notag \\
  =& uF[\sum {F^{ii} }  - c_{n,k} ]- \frac{1}{{u^2 }}\left[ -c_{n,k} |\nabla \Phi |^2  + (c_{n,k} \phi ' - 2uF)\phi ' + u^2 \frac{F^2}{c_{n,k}} \right].
  \end{align}
So we can get $ F \leq C$  from \eqref{4.23}. Hence \eqref{4.13} holds.

\end{proof}

\section{Preserving convexity}

In this section, we prove the flow \eqref{1.3} preserves convexity in $\mathbb{S}^{n+1}$. Denote $T>0$ to be largest time, up to which all flow hypersurfaces are strictly convex. Furthermore denote by $T^{*}$ is the largest time of existence of a smooth solution to (\ref{1.3}).

\begin{theorem} \label{th3.1}
Let $M(t)$ be an oriented immersed connected hypersurface in $\mathbb{S}^{n+1}$ with a positive semi-definite second
fundamental form $h(t) \in \Gamma_{k+1}$ satisfying equation \eqref{1.3} for $t \in [0, T^*)$, then then $M(t)$ is strictly convex for all
$t \in (0, T^*)$.
\end{theorem}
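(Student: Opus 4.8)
The plan is to prove that strict convexity is preserved by combining the evolution equation for the Weingarten operator with the \emph{constant rank theorem} for the second fundamental form. We are given that $h(t)\ge 0$ and $\lambda(t)\in\Gamma_{k+1}$ for $t\in[0,T^*)$, so the only way strict convexity can fail is if some eigenvalue of $h$ hits zero somewhere. The strategy is: (i) show that the set of times and points where $h$ is merely positive semi-definite but not positive definite forces a rank drop that propagates; (ii) apply a strong-maximum-principle / constant rank type argument to conclude that if $h$ is degenerate at one interior point at one time, it is degenerate everywhere at all earlier times, contradicting the initial strict convexity (or, if $M_0$ is only assumed $h_0\ge 0$ and $\lambda_0\in\Gamma_{k+1}$, that $h$ instantaneously becomes positive definite for $t>0$).

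First I would set up the framework: let $W=(h^i_j)$ be the Weingarten operator and recall the evolution equation \eqref{2.36} from Lemma \ref{lem2.13},
\begin{align*}
\partial_t h^i_j - uF^{ml}\nabla_m\nabla_l h^i_j = uF^{ml,pq}\nabla^i h_{ml}\nabla_j h_{pq} + \text{(gradient and zeroth order terms)}.
\end{align*}
The crucial structural feature is that $F=\sigma_{k+1}/\sigma_k$ is inverse-concave (concave as a function of $(\lambda_1^{-1},\dots,\lambda_n^{-1})$ type behaviour) on $\Gamma_{k+1}$, so the second-derivative term $uF^{ml,pq}\nabla^i h_{ml}\nabla_j h_{pq}$ has the right sign when tested against a null vector of $W$. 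I would then apply the general constant rank theorem for geometric parabolic equations of the type developed by Bian–Guan and its flow versions (the same machinery underlying \cite{GL15, BGL18}): one shows that the minimal eigenvalue, or equivalently the function $\sigma_{\ell+1}(W)$ where $\ell=\mathrm{rank}\,W$ on an open set, satisfies a differential inequality $\partial_t \phi \le uF^{ml}\nabla_m\nabla_l \phi + C(\phi + |\nabla\phi|)$ on that set, forcing the rank to be constant in space and nondecreasing backward in time, hence the degeneracy set is open and closed.

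The key steps in order: (1) record the evolution of $W$ and isolate the concavity/inverse-concavity property of $F$ on $\Gamma_{k+1}$ that makes $F^{ml,pq}$ controllable along null directions of $W$; (2) suppose for contradiction that at some $(x_0,t_0)$ with $t_0\in(0,T^*)$ the operator $W$ has a nontrivial kernel, pick the locally constant rank $\ell<n$ on a parabolic neighbourhood, and write $\sigma_{\ell+1}(W)$ as the test quantity; (3) derive the differential inequality for $\sigma_{\ell+1}(W)$ using the first Codazzi-type identities, the commutator Lemma \ref{lem2.10}, and the bounds $\tfrac1C\le F\le C$, $u\ge \min u_0>0$ from Theorems \ref{th4.3} and \ref{th4.5}; (4) apply the strong maximum principle to conclude $\sigma_{\ell+1}(W)\equiv 0$ backward in time, so $\mathrm{rank}\,W\le \ell<n$ on $M_0$, contradicting that $M_0$ is strictly convex — and in the semi-definite initial case, conclude instead that $h$ must be positive definite for all $t>0$ since it cannot be degenerate on any parabolic subcylinder touching $t=0^+$.

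The main obstacle is step (3): verifying that the gradient terms $\nabla^i h_{ml}\nabla_j h_{pq}$ contracted with $F^{ml,pq}$, together with the first-order terms $\nabla^i u\,\nabla_j F$ and $F\nabla h^i_j\nabla\Phi$, can be absorbed into a clean inequality of the form required by the constant rank theorem. This requires the delicate algebraic fact — special to $F=\sigma_{k+1}/\sigma_k$ and the cone $\Gamma_{k+1}$ — that on the kernel of $W$ the "bad" part of $F^{ml,pq}\nabla_i h_{ml}\nabla_j h_{pq}$ is dominated by terms already present, which is exactly where inverse-concavity of $F$ is used; the zeroth-order terms $-(c_{n,k}\phi'+uF)(h^2)^i_j$ and $+2uF\delta^i_j$ are harmless because the former vanishes on the kernel and the latter is a positive multiple of the identity which only helps push eigenvalues up. One must also be careful that the ambient curvature contributions in Lemma \ref{lem2.10} (the $[h g - h g + \cdots]$ bracket) are linear in $h$ and hence controlled. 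Once this algebraic lemma is in place, the constant rank argument and strong maximum principle conclude the proof in a standard way.
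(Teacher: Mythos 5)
Your strategy coincides with the paper's second proof: a Bian--Guan constant rank argument applied to the evolution \eqref{2.36}, using the structure/inverse-concavity property of $F=\sigma_{k+1}/\sigma_k$ together with the a priori bounds on $u$ and $F$. (The paper also gives a first, more elementary proof, valid for strictly convex $M_0$: a maximum principle applied to the largest eigenvalue of the inverse Weingarten map $b=W^{-1}$, using inverse concavity in the sense of Andrews \cite{Andrews:/2007}; that route even yields a quantitative bound $h^i_j\ge c\,\delta^i_j$.)

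There is, however, a genuine gap in how you close the argument under the hypotheses actually stated in Theorem \ref{th3.1}. The theorem assumes only $h(t)\ge 0$ with $\lambda\in\Gamma_{k+1}$, not strict convexity of $M_0$, and asserts strict convexity for every $t>0$. Your main line ends by ``contradicting that $M_0$ is strictly convex'', which is not available here, and your fallback (``$h$ cannot be degenerate on any parabolic subcylinder touching $t=0^+$'') is unsupported: the constant rank machinery by itself only gives that the minimal rank of $W$ is constant in space and monotone in time, and this is perfectly consistent with a solution that stays degenerate of the same rank for all $t$, so no contradiction arises from merely semi-definite initial data. What rescues the strict statement in the sphere is precisely the extra zeroth-order term $2uF\delta^i_j$ in \eqref{2.36}, coming from the ambient curvature $K=1$: carried through the computation it produces the strictly negative contribution $-2uF\sigma_l(G)$ on the right-hand side of the differential inequality for the Bian--Guan test function $\varphi=\sigma_{l+1}(W)+\sigma_{l+2}(W)/\sigma_{l+1}(W)$, and since $u$ and $F$ are bounded below (Theorems \ref{th4.3} and \ref{th4.5}) while $\sigma_l(G)>0$ near the degenerate point, the strong maximum principle is violated at any point where $\varphi$ vanishes. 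You noticed this term but set it aside as merely ``harmless''; in fact it is the decisive ingredient, and without invoking it your argument proves at best preservation of strict convexity from strictly convex data, not the instantaneous strict convexity claimed. A secondary technical point: the test quantity should be $\sigma_{l+1}(W)+\sigma_{l+2}(W)/\sigma_{l+1}(W)$ rather than $\sigma_{l+1}(W)$ alone, since the quotient term is needed in \cite{BG09} to absorb exactly the gradient terms you flag as the main obstacle in your step (3).
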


Here we provide two proofs. The first proof follows from the following lemma.

\begin{lemma}\label{pres-conv}
Along the solution of \eqref{1.3} with a strictly convex initial hypersurface $M_{0}\subset\mathbb S^{n+1}$ all flow hypersurfaces $M_{t}=X(M,t)$ are strictly convex up to $T^{*}$, i.e. $T=T^{*}$, with a uniform estimate
\[h^{i}_{j}\geq c \delta^{i}_{j},\]
where $c=c(\sup_{M_{0}}\rho,\inf_{M_{0}}\rho,n,k,T^{*})$.
\end{lemma}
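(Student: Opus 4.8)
The goal is a lower bound $h^i_j \geq c\,\delta^i_j$ that is preserved along the flow, which then forces $T = T^*$ since convexity can only fail at the terminal time if the bound degenerates. The natural object to control is the smallest principal curvature, or equivalently the tensor $h^i_j - c\,\delta^i_j$ for a suitable small $c$, but since we already have from Theorem \ref{th4.5} a two-sided bound $\tfrac1C \leq F \leq C$ and from Theorem \ref{th4.3} a positive lower bound $u \geq u_0 > 0$, I would instead run a maximum principle argument on the quotient $\tfrac{h^i_i}{u}$ (more precisely its inverse, or the quantity $\psi = \tfrac{1}{u}\cdot(\text{smallest eigenvalue of }h)^{-1}$ after a standard regularization to handle non-smoothness of eigenvalues). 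The evolution equation \eqref{2.39} for $\tfrac{h^i_i}{u}$ in Lemma \ref{lem2.14} is exactly engineered for this: I would first test it at a point and direction realizing the minimal value of $\tfrac{\lambda_{\min}}{u}$, or equivalently the maximal value of $\tfrac{u}{\lambda_{\min}}$.

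First I would set up the regularized quantity: at a fixed time, pick the point $p$ and a unit vector $e$ where $W(e,e)/u$ is minimized over the unit sphere bundle, where $W = (h_{ij})$; by a standard trick one may assume the minimizing eigenvalue is simple and extend $e$ to a local vector field so that $\eta := \tfrac{h(e,e)}{u}$ is a smooth function near $p$ with $\eta \leq \tfrac{h^i_i/n}{u}$-type comparisons, and $\eta$ attains a spatial minimum at $p$. Then I would compute $\partial_t \eta - uF^{ml}\nabla_m\nabla_l \eta$ at $p$ using the tensorial evolution \eqref{2.36} for $h^i_j$ together with the evolution of $u$ from \eqref{2.32}, arriving at a scalar inequality analogous to \eqref{2.39}. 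At the spatial minimum the gradient terms $F\nabla\Phi\nabla\eta$ and $2F^{ml}\nabla_m u\,\nabla_l\eta$ vanish, the term $F^{ml,pq}\nabla^i h_{ml}\nabla_i h_{pq}$ is non-positive by concavity of $F = \sigma_{k+1}/\sigma_k$ on $\Gamma_{k+1}$ (Gårding cone concavity), and the remaining zeroth-order terms must be organized to show that $\eta$ cannot decrease to zero: the dangerous term is $-(c_{n,k}\tfrac{\phi'}{u} + F)(h^2)^i_i$, which is negative, but it is quadratically small in $\lambda_{\min}$ precisely when $\lambda_{\min}$ is small, while the structural positive term $+2F$ together with $u \geq u_0$ and $F \geq \tfrac1C$ provides a definite positive contribution; I would also need the $C^1$ bounds on $\rho$ and $\phi'(\rho) > 0$ on $[\inf\rho_0, \sup\rho_0] \subset [0,\pi/2)$ to bound $\nabla\Phi$, $\phi'$, and $\tfrac{c_{n,k}}{u^2}\nabla\Phi\nabla\phi'$.

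The heart of the argument is the ODE comparison: writing $\eta_{\min}(t) = \min_{M} \tfrac{\lambda_{\min}}{u}$, the evolution inequality should reduce, after discarding favorable terms, to $\tfrac{d}{dt}\eta_{\min} \geq -C_1 \eta_{\min}^2 - C_2\eta_{\min} + C_3$ or at worst $\tfrac{d}{dt}\eta_{\min} \geq -C_1\eta_{\min}$ with constants depending only on $n,k$, the $C^0$/$C^1$ bounds, and $T^*$; in either case Grönwall-type reasoning yields $\eta_{\min}(t) \geq e^{-C_1 T^*}\eta_{\min}(0) > 0$ on $[0,T^*)$, giving the claimed uniform bound with $c = c(\sup_{M_0}\rho, \inf_{M_0}\rho, n, k, T^*)$. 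The main obstacle I anticipate is bookkeeping the zeroth-order terms in \eqref{2.39} to confirm they have the right sign when $\lambda_{\min}$ is small: specifically one must check that the coefficient of $h^i_i$ and the $-(c_{n,k}\tfrac{\phi'}{u}+F)(h^2)^i_i$ term do not conspire to drive $\eta$ to zero, using that $(h^2)^i_i \leq (\max\lambda)\cdot h^i_i$ and that $F$ being bounded constrains $\max\lambda$ via the Newton-MacLaurin inequalities of Proposition \ref{prop2.4} once $\lambda \in \Gamma_{k+1}$; the hypothesis $h(t) \in \Gamma_{k+1}$ (semi-definite) in Theorem \ref{th3.1} is what keeps us inside the cone where $F$ is concave and all these inequalities are valid, so the preserved-cone structure must be invoked carefully at the start.
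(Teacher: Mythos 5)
There is a genuine gap in your plan, and it sits exactly where the proof is hardest: the gradient terms in \eqref{2.39}. You propose to work at a spatial minimum of $\eta=\lambda_{\min}/u$ and you list the term $F^{ml,pq}\nabla^i h_{ml}\nabla_i h_{pq}$ as ``non-positive by concavity'' as if that were benign. But you are proving a \emph{lower} bound at a \emph{minimum} point, so what you need is the right-hand side of \eqref{2.39} bounded \emph{below}; a non-positive term that is quadratic in $\nabla h$ is a harmful term of uncontrolled size (vanishing of $\nabla\eta$ controls only one combination of derivatives, not $\nabla h$ or $\nabla F$), and it cannot be discarded. The same problem affects $\tfrac{2}{u}\nabla^i u\,\nabla_i F=\tfrac{2\lambda_{\min}}{u}\nabla^i\Phi\,\nabla_i F$: the small factor $\lambda_{\min}$ does not help because $\nabla F$ is not a priori bounded and your ansatz produces no good quadratic gradient term against which to absorb it. Your zeroth-order bookkeeping ($+2F\geq 2/C$ dominant, $(h^2)^i_i$ and $h^i_i[\cdots]$ small when $\lambda_{\min}$ is small) is correct, but the argument collapses at the gradient level, so the Gr\"onwall inequality you aim for is never established.

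The paper's first proof resolves precisely this point by evolving the \emph{inverse} Weingarten tensor $b=\{h^i_j\}^{-1}$ and bounding its largest eigenvalue from above. Differentiating the inverse twice produces the extra term $2F^{qs}b^{pr}\nabla_i h_{pq}\nabla_j h_{rs}$, and the \emph{inverse concavity} of $F=\sigma_{k+1}/\sigma_k$ (Andrews, cf.\ \cite{Andrews:/2007}), not its concavity, gives
\begin{align*}
-u\bigl(F^{pq,rs}+2F^{qs}b^{pr}\bigr)\nabla_i h_{pq}\nabla^j h_{rs}\,b^m_jb^i_m\;\leq\;-\tfrac{2u}{F}\,\nabla_iF\,\nabla^jF\,b^m_jb^i_m,
\end{align*}
a \emph{good} (negative) term carrying the weight $\lambda_n^{-2}$, which absorbs the cross terms $-b\,\nabla F\,\nabla\Phi$ by Young's inequality; meanwhile the term $2uF\delta^i_j$ in \eqref{2.36} becomes $-2uF(b^2)^m_m$, quadratically negative in $\lambda_n^{-1}$ and hence dominating the remaining linear terms $\psi_1\lambda_n^{-1}+\psi_2$ (bounded via Theorems \ref{th4.1}, \ref{th4.3}, \ref{th4.5}), yielding the contradiction at a large maximum of $b$ via the Gerhardt trick. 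If you want to salvage your formulation directly in terms of $\lambda_{\min}/u$, you would still have to import this inverse-concavity mechanism (or an equivalent device) to handle $F^{ml,pq}\nabla h\nabla h$ and $\nabla F$; as written, the proposal does not close. (The paper's second proof, via the Bian--Guan constant rank theorem, is an entirely different route and is not comparable to your approach.)
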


\begin{proof}
We calculate the evolution equation of the inverse $\{b ^i_{j}\}= \{h_i^j\}^{-1}$, which is well defined up to $T$. We suppose that $T<T^{*}$.
\begin{align*}
\partial_t {b}^m_m=&-b^m_r \partial_t {h}^r_s b^s_m,\\
F^{ij}\nabla_i\nabla_j b^m_{m}=&2 F^{ij}b^m_r\nabla_i h^r_sb^s_p \nabla_j h^p_q b^q_m-F^{ij}b^m_r \nabla_i\nabla_j h^r_s b^s_m,\\
\nabla_i u =& h_{i}^j \nabla_j \Phi,
\end{align*}
and by evolution equation for $h_{i}^j$, we deduce
\begin{align}\label{pres-conv-1}
&\partial_t {b}^m_m-uF^{ij}\nabla_i\nabla_j b^m_{m} - F \nabla^i\Phi  \nabla_i b^m_{m}\notag \\
=&-u(F^{pq,rs}+2 F^{qs}b^{pr} )\nabla_i h_{pq} \nabla^j h_{rs} b^m_jb^i_m- b^{m}_{j}\nabla^{j}{F}\nabla_m \Phi- b^{i}_{m} \nabla_i F \nabla^m{\Phi}  \nonumber  \\
&-uF^{ij}h_{ir}h^{r}_{j}b^{m}_{m}+(c_{n,k}\phi'+uF)-\phi'Fb^{m}_{m}\\ \nonumber
&+uF^{ij}g_{ij}b^{m}_{m}+c_{n,k}ub^{m}_{m}-2uFb^{m}_{r}b^{r}_{m}\\ \nonumber
&\leq -\frac{2u}{F} \nabla_i F \nabla^j {F} b^{m}_{j}b^{i}_{m}- b^{m}_{j} \nabla^j {F} \nabla_m \Phi - b^{i}_{m} \nabla_i F \nabla^m {\Phi}\\ \nonumber
&+\psi_{1}(t)b^{m}_{m}+\psi_{2}(t)-2uFb^{m}_{r}b^{r}_{m},
\end{align}
where we used the inverse concavity of $F$, cf. Theorem 2.3 in \cite{Andrews:/2007} and where $\psi_{i}$ are smooth functions which are uniformly bounded up to $T$, due to the uniform upper and lower bounds of $F$.

We use a well known trick to estimate the maximal eigenvalue of $b$, e.g. compare Lemma~6.1 \cite{Gerhardt:01/1996}.
Let
\[Q=\sup\{b_{ij}\eta^i\eta^j\ |\quad  g_{ij}\eta^i\eta^j=1\},\]
and suppose this function attains a maximum at $(t_0,\xi_0)$ with $t_0<T,$ i.e.
\[Q(t_{0},\xi_{0})=\sup_{[0,t_{0}]\times M}Q.\]
 Choose coordinates in $(t_0,\xi_0)$ with
\[g_{ij}=\delta_{ij},\quad b_{ij}=\lambda_i^{-1}\delta_{ij},\quad \lambda_1^{-1}\leq \dots\leq \lambda_n^{-1}.\]
Let $\eta$ be the vector field $\eta=(0,\dots,0,1)$ and define
\[\tilde
Q=\frac{b_{ij}\eta^i\eta^j}{g_{ij}\eta^i\eta^j},\]
then locally around $(t_0,\xi_0)$ we have $\tilde Q\leq Q$ and the derivatives coincide. Thus at $(t_0,\xi_0)$ the function $\tilde Q$ and $b^n_n$ satisfy the same evolution equation and we may show that the right hand side of \eqref{pres-conv-1} is negative at the point $(t_0,\xi_0)$ in these coordinates, yielding a contradiction.

In these coordinates we obtain
\begin{align}\label{pres-conv-2}
&\partial_t {b}^n_n-uF^{ij}\nabla_i\nabla_j b^n_{n} - F \nabla^i\Phi  \nabla_i b^n_{n} \notag\\
\leq &-\frac{2u}{F}(\nabla_n F)^{2}\lambda_{n}^{-2}-2 \lambda_{n}^{-1}{\nabla_n F}\nabla_n \Phi+\psi_{1}(t)\lambda_{n}^{-1}+\psi_{2}(t)-2uF\lambda_{n}^{-2}\nonumber \\
\leq& -\frac{2u}{F}(\nabla_n F)^{2}\lambda_{n}^{-2}+\epsilon\lambda_{n}^{-2}(\nabla_n F)^{2}+c_{\epsilon}(\nabla_n \Phi)^{2}+\psi_{1}(t)\lambda_{n}^{-1}+\psi_{2}(t)-2uF\lambda_{n}^{-2} \nonumber\\
<&0,
\end{align}
for small $\epsilon$ and large $\lambda_{n}^{-2}$. Hence we obtain that $\lambda_{n}^{-1}$ does {\it{not}} blow up at $T$, in contradiction to the definition of $T<T^{*}$. Hence we must have $T=T^{*}$, with a uniform lower bound on $h_{ij}$ on finite intervals. The proof is complete.  \end{proof}

The second proof is from the Constant Rank Theorem in Bian-Guan \cite{BG09} , along the lines of proof of Theorem 6.1 in \cite{GL15} (where the constant rank theorem was proved for a general flow in $\mathbb R^{n+1}$). For (\ref{1.3}) in $\mathbb S^{n+1}$, we have an extra good term which is associated to the curvature $K=1$. We outline the arguments here with necessary modification.

\begin{proof}

Let $W = (g^{im} h_{mj} )$, and $l(t)$ be the minimal rank of
$W$. Suppose at $W$ is degenerate $(x_0, T)$, such that $W(T)$ attains minimal rank $l<n$
at $x_0$. Set \[\varphi(x, t) = \sigma_{l+1}(W(x, t)) +\frac{\sigma_{l+2}(W(x, t))}{\sigma_{l+1}(W(x, t))}.\] It is proved in section
2 in \cite{BG09} that $\varphi$ is in $C^{1,1}$.

As in Bian-Guan \cite{BG09}, near $(x_0, T)$, the index set $\{1, 2, \cdots, n\}$ can be divided in to two subsets $B,G$, where for $i \in B$, the eigenvalues of $\{ W_{ij} \}$, $\lambda_i$ is small and for
$j \in G$, $\lambda_j$ is strictly positive away from 0. As in \cite{BG09}, we may assume at each
point of computation, $\{ W_{ij} \}$ is diagonal. Notice that $W_{ii} \leq C\varphi$ for all $i \in B$.

Denote $G = uF - c_{n,k} \phi '$ and $F = \frac{{\sigma _{k + 1} (\lambda )}}{{\sigma _k (\lambda)}}$. From \eqref{EV-W} we recall
\begin{align}\label{3.1}
\partial _t h_i^i  = \nabla ^i \nabla _i G + Gg^{ik} (h^2 )_{ki}  + G.
\end{align}
So we have the following equality
\begin{align}
 &\sum {G^{\alpha \beta } \varphi _{\alpha \beta } }  - \varphi _t\notag\\
   = &~O(\varphi  + \sum\limits_{i,j \in B} {|\nabla W_{ij} |} ) - \frac{1}{{\sigma _1 (B)}}\sum\limits_{\alpha \beta } {\sum\limits_{i \ne j \in B} {G^{\alpha \beta } W_{ij,\alpha } W_{ij,\beta } } } \notag \\
  &- \frac{1}{{\sigma _1 (B)^3 }}\sum\limits_{\alpha \beta } {\sum\limits_{i \in B} {G^{\alpha \beta } (W_{ii,\alpha } \sigma _1 (B) - W_{ii} \sum\limits_{j \in B} {W_{jj,\alpha } } )(W_{ii,\beta } \sigma _1 (B) - W_{ii} \sum\limits_{j \in B} {W_{jj,\beta } } )} }  \notag \\
  &- 2\sum\limits_{i \in B} {[\sigma _l (G) + \frac{{\sigma _1 (B|i)^2  - \sigma _2 (B|i)}}{{\sigma _1 (B)^2 }}]\sum\limits_{\alpha \beta } {\sum\limits_{j \in G} {G^{\alpha \beta } \frac{{W_{ij,\alpha } W_{ij,\beta } }}{{W_{jj} }}} } } \notag  \\
  &+ \sum\limits_{i \in B} {[\sigma _l (G) + \frac{{\sigma _1 (B|i)^2  - \sigma _2 (B|i)}}{{\sigma _1 (B)^2 }}]\left( {\sum\limits_{\alpha \beta } {G^{\alpha \beta } W_{ii,\alpha \beta } }  - \partial _t W_{ii} } \right)}
  \end{align}
By \eqref{3.1} and (\ref{2.36}),
\begin{align}
 \partial _t h_i^i  = u[F^{\alpha \beta } \nabla_\alpha  \nabla _\beta  h_{ii}  + F^{\alpha \beta ,\gamma \eta }\nabla _i h_{\alpha\beta }  \nabla _i h_{\gamma\eta } ]+ O(\varphi  + \sum\limits_{i,j \in B} {|\nabla W_{ij} |} ) + 2uF,
  \end{align}
where we used the facts $\nabla_i u = O(\varphi)$, $\nabla_i \nabla_i u = O(\varphi + |\nabla W_{ii} |)$ and $\nabla_i \nabla_i \Phi = \phi' +O(\varphi )$, $\forall i\in B$.

As $
G^{\alpha \beta }  = uF^{\alpha \beta }$,
\begin{align}
\sum\limits_{\alpha \beta } {G^{\alpha \beta } W_{ii,\alpha \beta } }  - \partial _t W_{ii}  =& u \sum\limits_{\alpha \beta } {F^{\alpha \beta } \nabla _\alpha  \nabla _\beta  h_{ii} }  - \partial _t h_i^i  \notag \\
=&O(\varphi  + \sum\limits_{i,j \in B} {|\nabla W_{ij} |} ) -u F^{\alpha \beta ,\gamma \eta }W_{\alpha\beta,i }  W_{\gamma\eta,i } - 2uF,
\end{align}
Since $F$ satisfies the structure condition in \cite{BG09}
\[F^{\alpha \beta ,\gamma \eta }W_{\alpha\beta,i }  W_{\gamma\eta,i } +2\sum\limits_{\alpha \beta } \sum\limits_{j \in G} F^{\alpha \beta } \frac{{W_{ij,\alpha } W_{ij,\beta } }}{{W_{jj} }}\ge 0.\]
We obtain
\begin{align}
 \sum {G^{\alpha \beta } \varphi _{\alpha \beta } }  - \varphi _t    \leq C(\varphi + |\nabla \varphi|) -C \sum\limits_{i,j \in B} {|\nabla W_{ij} |}
 -2uF \sum\limits_{i \in B} {[\sigma _l (G) + \frac{{\sigma _1 (B|i)^2  - \sigma _2 (B|i)}}{{\sigma _1 (B)^2 }}]}. \notag
\end{align}
Following the analysis in the proof of Theorem 3.2 in \cite{BG09}, it yields
\begin{align}
 \sum {G^{\alpha \beta } \varphi _{\alpha \beta } }  - \varphi _t    \leq C(\varphi + |\nabla \varphi|)
 -2uF\sigma _l (G). \notag
\end{align}
This is a contradiction from the standard strong maximum principle for parabolic equations.
\end{proof}

\bigskip

\begin{remark}
The preservation convexity of flow (\ref{1.3}) when $k=0$ was first observed by JS and Chao Xia \cite{JX2019}.
\end{remark}

Since in the case of $k=0$, the longtime existence and convergence was proved in \cite{GL15}, the following sharp inequality follows.
\begin{proposition}
If $\Omega\subset \mathbb S^{n+1}$ is convex, then
\[
\mathrm{Vol}(\Omega)\le  \xi_{l,-1}(\mathcal A_l) \quad \forall l\ge 0,
\]
 with equality holds iff $\Omega $ is a convex geodesic ball.
\end{proposition}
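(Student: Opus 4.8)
The plan is to obtain the inequality from the flow \eqref{1.3} in the special case $k=0$, where \cite{GL15} already provides longtime existence and smooth convergence to a geodesic sphere. First I would observe that, $\Omega$ being convex, $M_{0}=\partial\Omega$ has $h^{i}_{j}\ge 0$, hence $h\in\bar\Gamma_{1}$; after an instant it is strictly convex by Theorem~\ref{th3.1}, so we may assume $M_{0}$ is a strictly convex radial graph over $\mathbb S^{n}$ about the center $p$. By \cite{GL15} the solution $M(t)$ of \eqref{1.3} with $k=0$ then exists for all $t\in[0,\infty)$, remains convex, and converges smoothly to a geodesic sphere $M_{\infty}=\partial B_{\infty}$.

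Next I would exploit the monotonicity: along this flow Proposition~\ref{prop2.9} gives, specializing \eqref{2.17} to $k=0$, that $\partial_{t}\mathcal A_{-1}=0$ (the case $l=k-1=-1$), while $\partial_{t}\mathcal A_{l}\le 0$ for every $l\ge 0$ (the case $l>k-1$). Consequently $\mathrm{Vol}(\Omega)=\mathrm{Vol}(\Omega_{t})$ for all $t$, and $\mathcal A_{l}(\Omega)=\mathcal A_{l}(\Omega_{0})\ge\mathcal A_{l}(\Omega_{t})$; letting $t\to\infty$ and using the smooth convergence, $\mathrm{Vol}(\Omega)=\mathrm{Vol}(B_{\infty})$ and $\mathcal A_{l}(\Omega)\ge\mathcal A_{l}(B_{\infty})$. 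Recalling that $\xi_{l,-1}$ is by construction the strictly increasing positive function determined by $\mathrm{Vol}(B_{r})=\xi_{l,-1}\bigl(\mathcal A_{l}(B_{r})\bigr)$ on geodesic balls $B_{r}$, $r\in[0,\tfrac\pi2)$ (both $r\mapsto\mathrm{Vol}(B_{r})$ and $r\mapsto\mathcal A_{l}(B_{r})$ being strictly increasing), I would then conclude
\[
\mathrm{Vol}(\Omega)=\mathrm{Vol}(B_{\infty})=\xi_{l,-1}\bigl(\mathcal A_{l}(B_{\infty})\bigr)\le\xi_{l,-1}\bigl(\mathcal A_{l}(\Omega)\bigr),
\]
which is the claimed estimate.

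For the equality statement, the converse direction is immediate from the definition of $\xi_{l,-1}$. For the forward direction, equality forces $\mathcal A_{l}(\Omega)=\mathcal A_{l}(B_{\infty})$ by strict monotonicity of $\xi_{l,-1}$, hence $t\mapsto\mathcal A_{l}(\Omega_{t})$ is constant and $\partial_{t}\mathcal A_{l}\equiv 0$; tracing back into the proof of \eqref{2.17}, which proceeds through the Hsiung--Minkowski identities \eqref{2.8} and the Newton--MacLaurin inequality \eqref{2.6}, the vanishing of $\partial_{t}\mathcal A_{l}$ at every time forces equality in \eqref{2.6}, i.e.\ $M(t)$ is totally umbilic and thus a geodesic sphere; in particular $\Omega=\Omega_{0}$ is a geodesic ball. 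The genuinely hard input is not in this paper at all — it is the longtime existence and convergence borrowed from \cite{GL15} — and within the present argument the only step needing real care is the equality analysis, namely verifying that $\partial_{t}\mathcal A_{l}\equiv 0$ indeed forces umbilicity via the equality case of \eqref{2.6}.
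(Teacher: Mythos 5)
Your proposal is correct and matches the paper's own (very terse) argument: the proposition is obtained precisely by running the flow \eqref{1.3} with $k=0$, using the longtime existence and convergence to a geodesic sphere from \cite{GL15} together with the preservation of convexity, the monotonicity \eqref{2.17} (volume preserved, $\mathcal A_l$ non-increasing for $l\ge 0$), and the definition of $\xi_{l,-1}$. Your added detail on the equality case via the equality in the Newton--MacLaurin inequality is consistent with, and merely spells out, what the paper leaves implicit.
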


\section{Discussion of $C^2$ estimate}

The curvature estimate for flow (\ref{1.3}) is still open. In the case of $\mathbb R^{n+1}$, $\phi'=1$, the corresponding flow  is
\begin{equation}\label{1.3R}
X_t=(c_{n,k}-uF(\lambda))\nu.
\end{equation}
Flow (\ref{1.3R}) has the same curvature estimate issue. In \cite{GL18, GL 20} the flow (\ref{1.3R}) was converted to a corresponding inverse type flow for the Euclidean support function $u$ of the evolving convex body parametrized on the outer normals (i.e. on $\mathbb S^n$). The longtime existence and convergence of the admissible solution were proved in \cite{GL18, GL 20}.

Below we will convert flow (\ref{1.3}) to an evolution of convex bodies in $\mathbb R^{n+1}$ and we write down the evolution equation of the corresponding Euclidean support function $\tilde u$ on $\mathbb S^n$.

Introduce a new variable $\gamma$ satisfying
\begin{align} \label{5.2}
\frac{{d\gamma }}{{d\rho }} = \frac{1}{\phi }.
\end{align}
Let $\omega  = \sqrt {1 + |\nabla \gamma |^2 }$, one can compute the unit outward normal $\nu  = \frac{1}{\omega }(1, - \frac{{\nabla \rho }}{{\phi ^2 }})$, and
\begin{align}
\label{5.3} u =& \frac{\phi }{\omega }, \\
\label{5.4} g_{ij}  =& \phi ^2 e _{ij}  + \rho _i \rho _j , \\
\label{5.5} g^{ij}  =& \frac{1}{{\phi ^2 }}(e^{ij}  - \frac{{\gamma^{i} \gamma^{j} }}{{\omega ^2 }}), \\
\label{5.6} h_{ij}  =& \frac{\phi }{\omega }( - \gamma _{ij}  + \phi '\gamma _i \gamma _j  + \phi 'e_{ij} ) ,\\
\label{5.7} h_j^i  =& \frac{1}{{\phi \omega }}(e^{im}  - \frac{{\gamma^i \gamma^m }}{{\omega ^2 }})( - \gamma _{mj}  + \phi '\gamma _m \gamma _j  + \phi 'e_{mj} ).
\end{align}

It follows from \eqref{2.29} that the evolution equation for $\gamma$ is
 \begin{align} \label{5.8}
 \partial _t \gamma  =& \frac{1}{\phi }\partial _t \rho  = f\frac{\omega }{\phi } \notag \\
  =& c_{n,k} \frac{{\phi '}}{u} - \frac{{\sigma _{k + 1} (\lambda )}}{{\sigma _k (\lambda )}}.
\end{align}
From \eqref{5.7},
\begin{align}
h_j^i  =& \frac{1}{{\phi \omega }}(e^{im}  - \frac{{\gamma^i \gamma^m }}{{\omega ^2 }})( - \gamma _{mj}  + \phi '\gamma _m \gamma _j  + \phi 'e_{mj} ) \notag  \\
 =& \frac{1}{{\phi \omega }}\big\{(e^{im}  - \frac{{\gamma^i \gamma^m }}{{\omega ^2 }})( - \gamma _{mj}  +\gamma _m \gamma _j  + e_{mj} ) + (\phi '-1) \delta^i_j \big\}\notag  \\
 =& \frac{e^\gamma}{{\phi }} \widetilde{h}_j^i + \frac{\phi '-1}{{\phi \omega }} \delta_{j}^i ,
\end{align}
where
$$\widetilde{h}_j^i = \frac{1}{{e^\gamma \omega }} \big\{(e^{im}  - \frac{{\gamma^i \gamma^m }}{{\omega ^2 }})( - \gamma _{mj}  +\gamma _m \gamma _j  + e_{mj} ),$$
 which is the Weingarten tensor of the graph $\widetilde{M}$ (over $\mathbb{S}^n$) of radial function $\widetilde{\rho} = e^\gamma$ in $\mathbb{R}^{n+1}$. Since $M(t)$ is strictly convex (i.e. $\{h_j^i\} >0$), and $\phi'-1 = \cos \rho -1 <0$,  thus $\{\widetilde{h}_j^i\} >0$. That is $\widetilde{M}$ is strictly convex.

We have
\begin{align} \label{5.10}
 \partial _t \widetilde{\rho} =& \widetilde{\rho} \partial _t \gamma  \notag \\
  =& c_{n,k} \frac{{\phi '}}{\phi} \widetilde{\rho} \omega - \frac{{\widetilde{\rho}^2}}{\phi} \frac{{\sigma _{k + 1} }}{{\sigma _k}}(\widetilde{h}_j^i +  \frac{\phi '-1}{{\widetilde{\rho} \omega }} \delta_{j}^i).
\end{align}
Let $\widetilde{u}$ be the support function of the strictly convex body $\widetilde{M}$, and $z$ and $\nu$ be the unit radial vector and the unit outer normal vector of $\widetilde{M}$, respectively. Then from $\widetilde{\rho}(z,t) (z \cdot \nu) = \widetilde{u}(\nu,t)$, we can get $\log \widetilde{\rho}(z,t) = \log \widetilde{u}(\nu,t) - \log (z \cdot \nu)$ and
\begin{align*}
\frac{{1}} {{ \widetilde{\rho}(z,t)}}\frac{{\partial \widetilde{\rho}(z,t)}} {{\partial t}} =& \frac{{1}} {{\widetilde{u}(\nu,t)}} [\nabla \widetilde{u} \cdot \nu_t + \widetilde{u}_t ] - \frac{z \cdot \nu_t}{z \cdot \nu}  \notag \\
=& \frac{{1}} {{\widetilde{u}(\nu,t)}} \frac{{\partial \widetilde{u}(\nu,t)}} {{\partial t}} + \frac{{1}} {{\widetilde{u}(\nu,t)}}[(\nabla \widetilde{u} -  \widetilde{\rho}(z,t) z)\cdot \nu_t]\notag \\
=& \frac{{1}} {{\widetilde{u}(\nu,t)}} \frac{{\partial \widetilde{u}(\nu,t)}} {{\partial t}}.
\end{align*}
Denote $W_{\widetilde{u}} =: \{ \widetilde{u}_{ij} + \widetilde{u} \delta_{ij} \} = \{\widetilde{h}_j^i\} ^{-1}$ and we can get the evolution equation of $\widetilde{u}$ as follows
\begin{align} \label{5.11}
 \partial _t \widetilde{u} =& \frac{ \widetilde{u}}{\widetilde{\rho}} \partial _t  \widetilde{\rho} \notag \\
  =& c_{n,k} \frac{{\phi '}}{\phi} \widetilde{u} \omega - \frac{{\widetilde{\rho} \widetilde{u} }}{\phi} \frac{{\sigma _{k + 1} }}{{\sigma _k}}(\widetilde{h}_j^i +  \frac{\phi '-1}{{\widetilde{\rho} \omega }} \delta_{j}^i) \notag \\
  =&\colon G(W_{\widetilde{u}}, \widetilde{u}, \nabla \widetilde{u} ).
\end{align}
This equation is of inverse type, and {\it the question is whether \eqref{5.11} exists for all time?}

\bigskip

%From \eqref{4.24}, we know that $W_{\widetilde{u}}$ is bounded from above. That is, the $C^2$ estimate of $\widetilde{u}$ is automatic.

\section*{Acknowledgments:}
Part of this work was done while CC was visiting McGill University in 2018, he would
like to thank McGill and PG for the warm hospitality.

This work was made possible through a research scholarship JS received from the DFG and which was carried out at Columbia University in New York. JS would like to thank the DFG, Columbia University and especially Prof. Simon Brendle for their support.

Parts of this work were written during a visit of JS to McGill University in Montreal. JS would like to thank McGill and PG for their hospitality and support.

\end{document}